\newtheorem{theorem}{Theorem}
\newtheorem{definition}{Definition}
\newtheorem{remark}{Remark}
\newtheorem{lemma}{Lemma}
\newtheorem{proposition}{Proposition}
\newenvironment{proofof}[1]{\noindent{\emph{Proof of #1.}}}
{$\hfill\Box$\vspace{0.1 cm}\\}
\newcommand{\brho}{\boldsymbol{\rho}}
\newcommand{\bsigma}{\boldsymbol{\sigma}}
\newcommand{\R}{\mathbb R}
\newcommand{\Riem}{\mathcal{R}}
\newcommand{\tv}{\hbox{\textrm{TV}}}
\newcommand{\BV}{\hbox{\textrm{BV}}}
\newcommand{\N}{{\mathbb N}}
\newcommand{\Sim}{\mathcal{S}}
\newcommand{\T}{{\mathcal T}}
\newcommand{\Z}{\mathbb{Z}}
\newcommand{\Flux}{\mathcal{F}}
\newcommand{\eps}{\varepsilon}
\newcommand{\del}{\partial}
\newcommand{\be}{\begin{equation}}
\newcommand{\ee}{\end{equation}}
\newcommand{\modulo}[1]{{\left|#1\right|}}
\newcommand{\norma}[1]{{\left\|#1\right\|}}
\renewcommand{\L}[1]{\mathbf{L^#1}}
\newcommand{\Lloc}[1]{\mathbf{L^{#1}_{loc}}}
\newcommand{\C}[1]{\mathbf{C^{#1}}}
\newcommand{\Cc}[1]{\mathbf{C_c^{#1}}}
\newcommand{\W}[1]{\mathbf{W^{#1}}}
\DeclareMathOperator{\sgn}{sgn}
\let\@fnsymbol\@arabic
\title{A multi-scale multi-lane model for traffic regulation via autonomous vehicles}
\author{%
  Paola Goatin\footnotemark[1]
 \and  
  Benedetto Piccoli\footnotemark[2]
}
\date{\today}
\begin{document}

\maketitle
 
\footnotetext[1]{
  Universit\'e C\^ote d'Azur, Inria, CNRS, LJAD, Sophia Antipolis, France. E-mail:
  \texttt{paola.goatin@inria.fr}}
  
\footnotetext[2]{{Rutgers University - Camden, Camden, New Jersey, USA}. E-mail:
  \texttt{piccoli@camden.rutgers.edu}}  


%
\begin{abstract}
  \vspace{5pt}
\noindent
We propose a new model for multi-lane traffic with moving bottlenecks, e.g., autonomous vehicles (AV). It consists of a system of  balance laws for traffic in each lane, coupled in the source terms for lane changing, and fully coupled to ODEs for the AVs' trajectories.
More precisely, each AV solves a controlled equation depending on the traffic density, while the PDE on the corresponding lane has a flux constraint at the AV's location. We prove existence of entropy weak solutions, and we characterize the limiting behavior for the source term converging to zero (without AVs), corresponding to a scalar conservation law for the total density.
The convergence in the presence of AVs is more delicate and we show that the limit does not satisfy an entropic equation for the total density as in the original coupled ODE-PDE model. Finally, we illustrate our results via numerical simulations.

  \bigskip

  \noindent\textbf{Key words:} Multi-scale traffic flow models; PDE-ODE system; moving bottlenecks, autonomous vehicles; wave-front tracking; finite volume schemes; control problems.

\end{abstract}
                                %
%
%

\section{Introduction}\label{sec:intro}

Multi-lane traffic has attracted the attention of many researchers in transportation science, in particular applied mathematicians and engineers. 
Works on macroscopic models include papers on modeling
\cite{KlarWegener1999} and analysis \cite{ColomboCorli2006,HoldenRisebro2019}.
Recently, extensions to road networks were proposed, see \cite{GoatinRossi2020, GoatinRossiNHM2020}. 
Starting from these results, we aim at including the presence of autonomous vehicles, which act as moving bottlenecks to regulate traffic, e.g., dissipate traffic waves \cite{Bayen_book,stern2018dissipation}. Such regulation problems were addressed on a theoretical basis \cite{daini:hal-04366870, ECC2022, DelleMonache2019, FerraraIncremona2022,  liard2021pde, piacentini2018traffic,  Piacentini2021, TALEBPOUR2016143, Cicic2018, CicicJohansson2019}, with machine learning approaches~\cite{Wu2018, Vinitsky2018} and also via real world experiments~\cite{gunter2020commercially,stern2019quantifying,wu2019tracking}.\\
A number of approaches were developed to deal with moving bottlenecks ~\cite{DMG2014, GGLP2019, Lattanzio2011, Lebacque1998, Leclercq2004, RamadanSeibold2017, SimoniClaudel2017}.
Moreover, extensions to multiple bottlenecks~\cite{GDDMF2023} and to second order models~\cite{VGC2017} were proposed. 
In particular we follow the idea of coupling an ODE for the moving bottleneck to a PDE for the bulk traffic as in \cite{DMG2014,DMG2017,GGLP2019}.
The coupling is realized in the ODE right-hand side, which depends on the PDE solution, and by imposing a flux limiter to the PDE  at the location of the bottleneck. 
Convergence and continuous dependence for such models were achieved \cite{liard2019well,LiardPiccoli2021}.

In this work,
we consider a first order macroscopic multi-lane model~\cite{ColomboCorli2006, HoldenRisebro2019} 
of the form
\begin{equation} \label{eq:multi_model}
\begin{cases}
\del_t \rho_j + \del_x F_j(\rho_j) = \dfrac{1}{\tau}\left(S_{j-1} (\rho_{j-1},\rho_j) - S_{j} (\rho_{j},\rho_{j+1})\right),  \\
\rho_j(0,x) = \rho^0_j(x),
\end{cases}
\qquad 
\begin{array}{l}
x\in\R, ~t\geq 0, \\
j=1,\ldots,M.
\end{array}
\end{equation}
Above, $M$ is the number of lanes on the road, $\rho_j:[0,+\infty[\times\R\to [0,R_j]$  is the vehicle density on lane $j$,
$F_{j}(\rho_j)=\rho_j v_j(\rho_j)$ is the flux function
and the average speed $v_j=v_j(\rho_j):[0,R_j]\to [0,V_j]$ is a strictly decreasing function such that $v_j(0)=V_j$ and  $v_j(R_j)=0$, so that $F_j$ is concave.
The source terms $S_j$ account for 
mass exchanges form lane $j$ to lane $j+1$ (obviously, $S_0=S_M=0$), scaled by a relaxation factor $1/\tau\in\R^+$.
Letting $\tau\to 0$, this corresponds to letting to zero the lane-change relaxation parameter. \\ Throughout the paper, we will assume that
\begin{itemize}
    \item[{\bf (S0)}] $S_j:\R_+^2\to\R$, $j=1,\ldots,M-1$, are Lipschitz continuous in both variables, with Lipschitz constant $\Sim$, and 
    $S_j(0,0)=S_j(R_j,R_{j+1})=0$, $\del_1 S_j(u,w)\geq 0$ and $\del_2 S_j(u,w)\leq 0$, for all $u\in [0,R_j]$ and $w\in [0,R_{j+1}]$.
\end{itemize}

Some results will be subject to further hypotheses:

\begin{itemize}
    \item[{\bf (S1)}] If  $S_j(u,w)=0$, then $u=w$ (in particular, by {\bf (S0)} this implies $R_j=R$ for all $j=1,\ldots,M$);
     \item[{\bf (S2)}] There exists $c>0$ such that $\del_1 S_j(u,w)\geq c$ and $\del_2 S_j(u,w)\leq -c$, for all $u\in [0,R_j]$ and $w\in [0,R_{j+1}]$.
\end{itemize}

To account for the presence of an autonomous vehicle (AV) in lane $j$, we model it as a moving bottleneck~\cite{DMG2014, GGLP2019}
reducing to zero the capacity of the lane at the AV position, see also~\cite{LBCG2020}. Let $y_j^\ell:[0,+\infty[\,\to\R$, $\ell=1,\ldots,N_j$, be the trajectory of the $\ell$-th AV travelling on the lane $j$.
The coupling with~\eqref{eq:multi_model} is realized though the following microscopic ODE and constraint:
\begin{subequations}\label{eq:ODEcoupling}
\begin{align}
&\dot y_j^\ell (t) = \min\left\{u_j^\ell (t), v_j(\rho_j(t,y_j^\ell(t)+))\right\}, & t>0, \label{eq:AVtraj} \\
& y_j^\ell (0) = y_{0,j}^\ell,  & \label{eq:AVic} \\
& \rho_j (t,y_j^\ell(t)) \left(v_j  (\rho_j(t,y_j^\ell (t))) - \dot y_j^\ell (t) \right) \leq  0, &  t>0,  \label{eq:AVcoupling}
\end{align}
\end{subequations}
where $u_j^\ell: [0,+\infty[ \, \to [0,V_j]$ is the control law (desired speed) applied to the AV.
Remark that, when the constraint~\eqref{eq:AVcoupling} is active, i.e. $\min\left\{u_j^\ell (t), v_j(\rho_j(t,y_i^\ell(t)+))\right\} = u_j^\ell (t) < v_j(\rho_j(t,y_i^\ell(t)+))$,
a non-classical shock arises and 
the downstream value of the density must be zero: $\rho_j (t,y_j^\ell(t)+) = 0$, while $v_j (\rho_j(t,y_j^\ell (t)-))=  \dot y_j^\ell (t)$.
In particular, $\rho_j(t,y_j^\ell (t)-) = \hat\rho_{u_j^\ell}$, where $\hat\rho_{u_j^\ell}>0$  is defined by the unique root of the equation $v(\rho)=u_j^\ell$.
We refer the reader to Figure~\ref{fig:constraint} for a graphical representation.

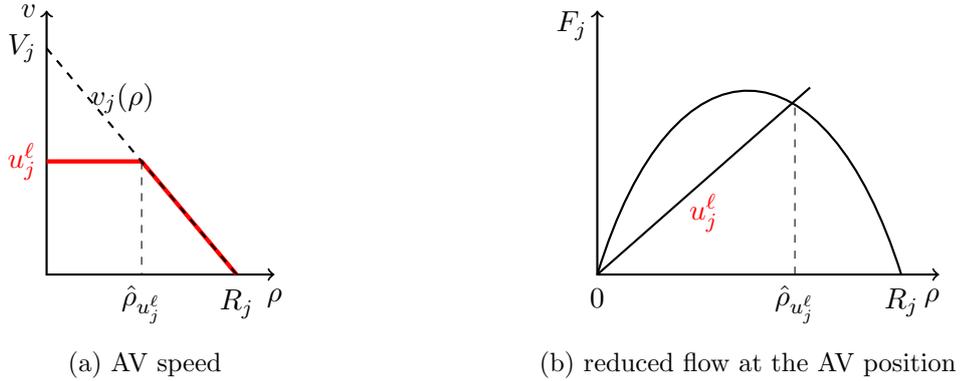
\begin{figure}[ht]
     \centering
     \begin{subfigure}[b]{0.49\textwidth}
         \centering
         \begin{tikzpicture}
\draw [<->,thick] (0,3.5) -- (0,0) -- (3.,0);
\draw [ultra thick,red] (0,1.5) -- (1.25,1.5) -- (2.5,0);
\draw [thick,dashed] (0,3) -- (2.5,0);
\draw [dashed] (1.25,1.5) --(1.25,0);
\node[below] at (3.,-0.05){$\rho$};
\node[below] at (1.25,0){$\hat\rho_{u_j^\ell}$};
\node[below] at (2.5,-0.07){$R_j$};
\node[left] at (0,3.5){$v$};
\node[left] at (0,1.5){$\textcolor{red}{u_j^\ell}$};
\node[left] at (0,3){$V_j$};
\node[above] at (1,2){$v_j(\rho)$};	
\end{tikzpicture}
         \caption{AV speed}
         \label{fig:speed}
     \end{subfigure}
     \hfill
     \begin{subfigure}[b]{0.49\textwidth}
         \centering
         \begin{tikzpicture}
\draw [<->, thick] (0,3.5) -- (0,0) -- (4.5,0);
\draw [thick](0,0) .. controls (1,3.3) and (3,3.2) .. (4,0);
\node [below] at (4.4,-0.05) {$\rho$};
\node [below] at (4,-0.05) {$R_j$};
\node [left] at (0,3.3) {$F_j$};
\node[below] at (0,-0.05){$0$};
\draw [thick](0,0)--(2.8,2.48);
\draw [dashed] (2.6,0)--(2.6,2.3);
\node [below] at (2.6,0) {$\hat{\rho}_{u_j^\ell}$};
\node [below] at (1.4,1.2) {$\textcolor{red}{u_j^\ell}$};
\end{tikzpicture}
         \caption{reduced flow at the AV position}
         \label{fig:flux}
     \end{subfigure}
        \caption{Graphical representation of the AV speed \eqref{eq:AVtraj} and the corresponding flow constraint \eqref{eq:AVcoupling} introducing the corresponding notation.}
        \label{fig:constraint}
\end{figure}


We first provide existence results 
for the coupled system \eqref{eq:multi_model}-\eqref{eq:ODEcoupling},
under assumption {\bf (S0)},
using the wave-front tracking approximation for a fixed $\tau>0$
and assuming bounded variation for the initial datum and the AV control law. Bounds on the total variation are achieved via a careful analysis of the contribution of the source terms and non-classical shocks which may appear at AV locations.\\
Unfortunately, Lipschitz-type estimates cannot be achieved uniformly in $\tau>0$.
Therefore, we first study the limit as $\tau\to 0$ without AVs, which is given by the solution of a scalar conservation law for the sum of the densities over all the lanes.
Such convergence is achieved under assumptions {\bf (S0)} and {\bf (S1)}, adapting a technique originally designed for a chromatography problem, see \cite{BressanShen2000}.\\
We then focus on the case of two lanes and a single AV, 
under assumptions {\bf (S0)}, {\bf (S1)}, and {\bf (S2)}. 
Even in this simplified setting, we show that in the limit  
$\tau\to 0$ we do not recover the entropy weak solutions in the sense of~\cite{DMG2014}, attesting the richness of the model. 

To further illustrate our results, we provide numerical simulations via finite volume schemes.
We start illustrating the dynamics on a 3-lane road with AVs and different speeds. An oscillatory pattern emerges due to the uneven distribution of traffic among lanes.
We then pass to investigate numerically the convergence as $\tau\to 0$ with and without AVs.
In particular, we show the difference between the limit of solutions with AVs and the scalar model proposed in~\cite{DMG2014}
for the sum of densities.

This paper provide further comprehension of the complexities arising when coupling multi-lane traffic with moving bottlenecks, e.g. AVs. Natural next steps include characterizing futher the limit as $\tau\to 0$, understanding continuous dependence, and extending the theory to networks.

\section{Existence of solutions: Wave-front tracking approximations and compactness estimates}\label{sec:existence}

The multi-lane model including moving bottlenecks reads:
\begin{equation} \label{eq:multiLWR+manyAV}
\begin{cases}
\del_t \rho_j + \del_x F_j(\rho_j) = \dfrac{1}{\tau}\left(S_{j-1} (\rho_{j-1},\rho_j) - S_{j} (\rho_{j},\rho_{j+1})\right),  \\
\rho_j(0,x) = \rho^0_j(x),\\
\dot y_j^\ell (t) = \min\{u_j^\ell (t), v_j(\rho_j(t,y_j^\ell(t)+))\}, \\
y_j^\ell (0) = y^\ell_{0,j},   \\
\rho_j (t,y_j^\ell(t)) \left(v_j(\rho_j  (t,y_j^\ell (t))) - \dot y_j^\ell (t) \right) \leq  0,
\end{cases}
\qquad 
\begin{array}{l}
x\in\R, ~t\geq 0, \\
\ell=1,\ldots,N_j, \\
j=1,\ldots,M, 
\end{array}
\end{equation}
Solutions to \eqref{eq:multiLWR+manyAV}
are intended in the following weak sense.

\begin{definition}\label{def:multiWEsol}
  The $(M+\sum_j N_j)$-tuple $\left(\brho^\tau, \bf{y}\right)$, with 
  \[
\brho^\tau=\left(\rho^\tau_1,\ldots,\rho^\tau_M\right)\quad \hbox{and}
  \quad {\bf y}=\left(y^1_1,\ldots,y^{N_1}_1,y^1_2,\ldots, y^{N_M}_M\right), 
  \]
  provides a solution
  to~\eqref{eq:multiLWR+manyAV} with $\brho^0=\left(\rho^0_1,\ldots,\rho^0_M\right)\in\L1 \left(\R; [0,R_1]\times\ldots\times [0,R_M]\right)$
  if the following conditions hold for $\ell=1,\ldots,N_j$, $j=1,\ldots,M$:
  \begin{enumerate}
  \item \label{def:sol-ML:1}
    $\brho^\tau \in \C{0} \left([0,+\infty[; \L1 \left(\R; [0,R_1]\times\ldots\times [0,R_M]\right)\right)$ and
    $\tv\left(\brho^\tau(t)\right) < + \infty$ for all $t>0$;
    

  \item \label{def:sol-ML:2} $y_j^\ell \in \mathbf{W^{1,1}_{loc}}(\R^+;\R)$;

  \item \label{def:sol-ML:3} 
For every $\kappa\in\R$ and for all $\varphi\in\Cc1 (\R^2;\R^+)$  it holds
\begin{align}
    \int_{\R^+}\int_\R &\left(
    |\rho^\tau_j-\kappa| \del_t \varphi +\sgn (\rho^\tau_j-\kappa)(F_j(\rho^\tau_j)-F_j(\kappa)) \del_x\varphi
    \right) dx\, dt 
    +\int_\R |\rho_j^0-\kappa| \varphi(0,x) \, dx \nonumber \\
    & + \dfrac{1}{\tau} \int_{\R^+}\int_\R \sgn (\rho^\tau_j-\kappa) \left(  S_{j-1} (\rho^\tau_{j-1},\rho^\tau_j) - S_{j} (\rho^\tau_{j},\rho^\tau_{j+1})   \right) \varphi \, dx\, dt \nonumber \\
    & {+} \, 2 \sum_{\ell=1}^{N_\ell} \int_{\R^+} \left(
    F_j(\kappa)  - \dot y_j^\ell (t) \kappa \right)^+ \varphi(t,y_j^\ell(t)) \, dt \geq 0\,; \label{eq:multiEC}
\end{align}

  \item \label{def:sol-ML:5}
    For a.e. $t >0$, 
    $F_j\left(\rho^\tau_j\left(t,y_j^\ell(t)\pm\right)\right)  - \dot y_j^\ell(t) \rho^\tau_j \left(t,y_j^\ell(t)\pm\right) \le 0$;
      
        \item \label{def:sol-ML:4}
    For a.e. $t > 0$, $\dot y_j^\ell(t) = 
    \min\left\{u_j^\ell(t), v_j\left(\rho^\tau_j\left(t, y(t)_j^\ell+\right)\right)\right\}$.

  \end{enumerate}
\end{definition}

Existence and uniqueness without moving bottlenecks have been provided in~\cite{GoatinRossi2020, HoldenRisebro2019}.
To include moving bottlenecks, we can use the techniques developed in~\cite{GGLP2019, LBCG2020}.

From now on, we consider for simplicity only one AV, say on lane $i\in\{1,\ldots,M\}$. We denote by $y=y(t)$ its trajectory, and by $u=u(t)$ its desired velocity.
We therefore focus on the well-posedness of the strongly coupled problem
\begin{equation} \label{eq:multiLWR+AV}
\begin{cases}
\del_t \rho_j + \del_x F_j(\rho_j) = \dfrac{1}{\tau}\left(S_{j-1} (\rho_{j-1},\rho_j) - S_{j} (\rho_{j},\rho_{j+1})\right),  \\
\rho_j(0,x) = \rho^0_j(x),\\
\dot y (t) = \min\{u (t), v_i(\rho_i(t,y(t)+))\}, \\
y (0) = y_0,   \\
\rho_i (t,y(t)) \left(v_i(\rho_i  (t,y (t))) - \dot y (t) \right) \leq  0,
\end{cases}
\qquad 
\begin{array}{l}
x\in\R, ~t\geq 0, \\
j=1,\ldots,M. 
\end{array}
\end{equation}
We will prove the following existence result.

\begin{theorem} \label{thm:multiLWR+AV}
Under hypotheses {\bf (S0)}, let the initial conditions $\rho_j^0\in\L1 (\R;[0,R_j])$ for $j=1,\ldots,M$, have finite total variation, and $u\in\L1(\R^+;[0,V_i])$ also have finite total variation.
Then, for any $\tau>0$, there exists a solution $\left(\brho^\tau,y^\tau\right)$ of~\eqref{eq:multiLWR+AV} in the sense of Definition~\ref{def:multiWEsol}.
\end{theorem}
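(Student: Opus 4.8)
\noindent\emph{Proof strategy.}
The plan is to build approximate solutions by wave-front tracking, combining the scheme used for the homogeneous multi-lane system in~\cite{GoatinRossi2020,HoldenRisebro2019} with the constrained Riemann solver for moving bottlenecks of~\cite{GGLP2019,LBCG2020}, and then to pass to the limit using compactness. Fix $\tau>0$ throughout (uniformity in $\tau$ is not sought here). For a mesh parameter $\nu$, I would first replace each initial datum $\rho_j^0$ by a piecewise constant function taking finitely many values in a grid $\mathcal{M}_\nu\subset[0,R_j]$, replace each $F_j$ by its piecewise linear interpolation $F_j^\nu$ on $\mathcal{M}_\nu$, and approximate the control $u$ by a piecewise constant $u^\nu$. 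Away from the AV, discontinuities are resolved by the standard Riemann solver for the (piecewise linear, concave) scalar flux. At the AV I would use the \emph{constrained} solver in the reference frame moving with speed $\dot y$: working with $\Phi_i(\rho)=F_i(\rho)-\dot y\,\rho$ and imposing $\Phi_i\le 0$, the solution is either classical (when the minimum in \eqref{eq:AVtraj} selects $v_i(\rho_i(t,y(t)+))$) or contains the non-classical shock dropping the downstream density described after \eqref{eq:AVcoupling}, with left trace $\hat\rho_{u}$ when the control is active.

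The source terms I would incorporate by operator splitting: alternate a homogeneous wave-front tracking step on a time slab $[t_n,t_{n+1})$ with a pointwise update $\rho_j\mapsto\rho_j+\frac{\dt}{\tau}\bigl(S_{j-1}(\rho_{j-1},\rho_j)-S_j(\rho_j,\rho_{j+1})\bigr)$, reprojected onto $\mathcal{M}_\nu$ so as to generate only finitely many new small fronts, and then send the splitting step, the mesh, and the front-tracking threshold to zero jointly. The AV trajectory $y^\nu$ is then the piecewise linear curve with slope $\dot y^\nu=\min\{u^\nu(t),v_i(\rho_i^\nu(t,y^\nu(t)+))\}$, recomputed at each interaction of a front with the AV and at each splitting time; since $0\le\dot y^\nu\le V_i$, these curves are uniformly Lipschitz.

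The heart of the argument, and the step I expect to be the main obstacle, is the uniform (in $\nu$, for fixed $\tau$ and on finite time horizons) bound on the total variation. I would introduce a Glimm-type functional $\Upsilon(t)=\sum_{j=1}^M \tv\bigl(\rho_j^\nu(t)\bigr)$ augmented by a weighted contribution measuring the strength of the fronts approaching the non-classical shock at the AV, and control its variation across three types of events. Interactions away from the AV do not increase $\Upsilon$ thanks to the concavity of $F_j^\nu$; the source-term updates increase $\Upsilon$ by at most $\frac{C}{\tau}\Upsilon\,\dt$, using assumption {\bf (S0)} and the Lipschitz constant $\Sim$, yielding a Gronwall bound $\Upsilon(t)\le\Upsilon(0)\,e^{Ct/\tau}$ which is finite for each fixed $\tau$ and $t$ (and is precisely where uniformity in $\tau$ is lost). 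The delicate point is the interaction of fronts with the moving non-classical shock, where a reflected front may be produced and the shock strength may change as $\dot y^\nu$ switches between the two branches of the minimum in \eqref{eq:AVtraj}; I would show that each such interaction increases $\Upsilon$ by a controlled amount and that only finitely many occur on $[0,T]$, so that $\Upsilon$ stays bounded. The weighting is chosen so that the potentially adverse reflection at the AV is compensated.

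Granting the BV bound, compactness follows in the usual way: the uniform $\L1$ and total variation bounds, together with a finite-propagation-speed estimate giving $\L1$-Lipschitz continuity in time, allow an application of Helly's theorem to extract $\rho_j^\nu\to\rho_j^\tau$ in $\Lloc{1}$ with $\brho^\tau\in\C0\bigl([0,+\infty[;\L1\bigr)$ of finite total variation, yielding conditions~\ref{def:sol-ML:1}. The uniformly Lipschitz trajectories $y^\nu$ converge uniformly on compact sets by Ascoli--Arzel\`a to some $y^\tau\in\mathbf{W^{1,1}_{loc}}$, giving~\ref{def:sol-ML:2}. The entropy inequality~\eqref{eq:multiEC}, including the AV source term, passes to the limit by the standard consistency of the constrained Riemann solver with Kru\v{z}kov-type entropies, as in~\cite{GGLP2019}. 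The remaining subtlety is to recover the ODE~\ref{def:sol-ML:4} and the flux constraint~\ref{def:sol-ML:5} in the limit: this requires the convergence of the \emph{downstream trace} $\rho_i^\nu(t,y^\nu(t)+)\to\rho_i^\tau(t,y^\tau(t)+)$ for a.e.\ $t$, which does not follow from $\Lloc1$ convergence alone and must be obtained through a trace argument exploiting the moving-bottleneck structure, in the spirit of~\cite{GGLP2019,LBCG2020}.$\hfill\Box$
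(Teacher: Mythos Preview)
Your overall architecture---WFT with operator splitting, compactness via Helly and Ascoli--Arzel\`a, then passage to the limit in the entropy inequality and finally in the ODE---matches the paper's. However, several technical choices differ from the paper in ways worth noting.

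First, the paper explicitly avoids a fixed grid $\mathcal{M}_\nu$ precisely because of the source update (it cites \cite[Chapter~7]{Bressan_book} rather than Chapter~6), so no reprojection step is needed; your reprojection would require an additional error-accumulation argument. Second, and more substantively, your Gronwall bound $\Upsilon(t)\le \Upsilon(0)\,e^{Ct/\tau}$ is correct but much weaker than what the paper obtains: exploiting the monotonicity in {\bf (S0)} (namely $\partial_1 S_j\ge 0$, $\partial_2 S_j\le 0$), the source update is in fact \emph{TV non-increasing} on each lane, so the total variation bound is uniform in $\tau$. The place where $\tau$-uniformity is genuinely lost is not the TV estimate but the $\L1$-Lipschitz-in-time estimate (Lemma~\ref{lem:timeLip}). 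Third, for the AV lane the paper uses a specific Glimm functional
\[
\Upsilon_i(t)=\tv\bigl(\rho_i^{\tau,\nu}(t,\cdot)\bigr)+2R_i+\gamma(t)+C\,\tv\bigl(u^\nu;[t,+\infty[\bigr),
\]
with $\gamma(t)=-2\hat\rho_{u^\nu(t)}$ when the non-classical shock is present, which is monotone across the homogeneous step and thus handles the AV interactions without your separate counting argument. Finally, for point~\ref{def:sol-ML:4} the paper does more than invoke a trace argument: it adapts in detail the case analysis of \cite[Section~3.3]{LiardPiccoli2021}, distinguishing the cases $\rho_-\gtrless\rho^*_{\bar u}$ and tracking the interaction of the big classical shock with small waves near $y^\tau(\bar t)$; your description of this step is too schematic to constitute a proof.
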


For the proof, we proceed similarly to~\cite{GGLP2019, GoatinLB2019}. \\
We approximate the initial data by a sequence of piece-wise constant functions $\rho_{j}^{0,\nu}$, $j=1,\ldots,M$, such that
each $\rho_{j}^{0,\nu}$ has a finite number of discontinuities and
\[
\lim_{\nu\to +\infty} \norma{\rho_{j}^{0,\nu} - \rho_j^0}_1 =0
\qquad \hbox{and} \qquad
\tv\left(\rho_{j}^{0,\nu}\right) \leq \tv \left(\rho_j^0\right)\quad\hbox{for all }\nu\in\N.
\]
Analogously, we approximate $u$ with piece-wise constant functions $u^\nu$ with a finite number of discontinuities and such that
\[
\lim_{\nu\to +\infty} \norma{u^\nu - u}_1 =0
\qquad \hbox{and} \qquad
\tv\left( u^\nu\right) \leq \tv \left(u\right)\quad\hbox{for all }\nu\in\N.
\]
Consider a sequence of time steps $\Delta t^\nu$, $\nu\in\N$,
such that $\lim_{\nu\to\infty}\Delta t^\nu=0$.
We construct a sequence
$\left(\brho^{\tau,\nu},y_\nu^\tau\right)$ of approximate solutions to~\eqref{eq:multi_model}-\eqref{eq:ODEcoupling} using the Wave Front Tracking (WFT) and operator splitting algorithms as follows.

For any $\nu>0$, we set $t^n:=n\Delta t^\nu$, $n\in\N$.
On the time interval $[t^n,  t^{n+1}[$, $n\in\N$, we define $\left(\brho^{\tau,\nu},y_\nu^\tau\right)$ as the WFT approximation of 
\begin{equation} \label{eq:hom_step}
\begin{cases}
\del_t \rho_j + \del_x F_j(\rho_j) = 0,  \\
\rho_j(t^{n},x) = \rho_{j}^{\tau,\nu}(t^{n},x),\\
\dot y (t) = \min\{u^\nu (t), v_i(\rho_i(t,y(t)+))\}, \\
y (t^{n}) = y_\nu^\tau (t^{n}-),   \\
\rho_i (t,y(t)) \left(v_i(\rho_i  (t,y (t))) - \dot y (t) \right) \leq  0,
\end{cases}
\qquad 
\begin{array}{l}
x\in\R, ~t\in[t^{n}, t^{n+1}[, \\
j=1,\ldots,M, 
\end{array}
\end{equation}
constructed as described in~\cite[Chapter 7]{Bressan_book} or~\cite[Chapter A.7]{Bayen_book}. Note that, due to the presence of the source term, we cannot employ a fixed grid as in~\cite[Chapter 6]{Bressan_book}, see also~\cite[Section 3.1]{GGLP2019}.
At time $t=t^{n+1}$, we set
\begin{equation} \label{eq:update}
\rho_{j}^{\tau,\nu}(t^{n+1},\cdot)=\rho_{j}^{\tau,\nu}(t^{n+1}-,\cdot) 
+ \frac{\Delta t^\nu}{\tau}\left(S_{j-1} (\rho_{j-1}^{\tau,\nu},\rho_{j}^{\tau,\nu}) - S_{j} (\rho_{j}^{\tau,\nu},\rho_{j+1}^{\tau,\nu})\right).
\end{equation}
This sequence of approximate solutions satisfies the following compactness estimates.

\begin{lemma} \label{lem:Linfty}
Let {\bf (S0)} hold and $\Delta t^\nu \leq \tau/2\Sim$. If $\rho_j^0(x)\in [0,R_j]$ for all $j=1,\ldots,M$, then the approximate solutions $\brho^{\tau,\nu}$ satisfy 
\[
\rho_{j}^{\tau,\nu}(t,x) \in [0,R_j] \qquad\hbox{for all }x\in\R,~t>0,~\tau>0.
\]
In particular, the domain $\Pi_{j=1}^M [0,R_j]$ is invariant for~\eqref{eq:multiLWR+AV}.
\end{lemma}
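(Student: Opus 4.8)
The plan is to argue by induction on the time index $n$, showing that each of the two half-steps making up the splitting scheme maps the box $\Pi_{j=1}^M[0,R_j]$ into itself. The base case is immediate, since the piecewise-constant approximations $\rho_j^{0,\nu}$ are built to take values in $[0,R_j]$. So I would assume $\rho_j^{\tau,\nu}(t^n,\cdot)\in[0,R_j]$ for every $j$ and check two things: first, that the homogeneous WFT step on $[t^n,t^{n+1}[$ preserves these bounds, and, second, that the source update \eqref{eq:update} at $t=t^{n+1}$ does too.

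For the homogeneous step \eqref{eq:hom_step}, each lane evolves by a decoupled scalar conservation law with concave flux $F_j$ satisfying $F_j(0)=F_j(R_j)=0$, subject on lane $i$ to the moving-bottleneck constraint. The wave-front tracking solution of a scalar conservation law obeys the maximum principle, so away from the AV the values stay between the extrema of the datum, hence in $[0,R_j]$. At the constraint it suffices to recall (see the discussion following \eqref{eq:ODEcoupling}) that the constrained Riemann solver only ever produces the non-classical downstream value $\rho_i(t,y(t)+)=0$ and the upstream value $\hat\rho_{u}\in(0,R_i)$, both of which lie in $[0,R_i]$. This gives $\rho_j^{\tau,\nu}(t^{n+1}-,\cdot)\in[0,R_j]$ for all $j$.

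The delicate point, which I expect to be the only real obstacle, is the source update. I would write $a:=\Delta t^\nu/\tau\le 1/(2\Sim)$ and, for fixed $j$, regard the right-hand side of \eqref{eq:update} as a function $g(\rho_j):=\rho_j+a\left(S_{j-1}(\rho_{j-1},\rho_j)-S_j(\rho_j,\rho_{j+1})\right)$ of the single variable $\rho_j$, with the neighbors $\rho_{j\pm1}\in[0,R_{j\pm1}]$ frozen (and $S_0=S_M=0$ at the two extreme lanes). By {\bf (S0)}, $\del_2 S_{j-1}\le 0$ and $\del_1 S_j\ge 0$, while both are bounded by $\Sim$ in absolute value, so
\[
g'(\rho_j)=1+a\left(\del_2 S_{j-1}(\rho_{j-1},\rho_j)-\del_1 S_j(\rho_j,\rho_{j+1})\right)\ge 1-2a\Sim\ge 0,
\]
where the CFL restriction $\Delta t^\nu\le\tau/(2\Sim)$ is exactly what forces $g$ to be nondecreasing. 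Consequently $g$ attains its minimum over $[0,R_j]$ at $\rho_j=0$ and its maximum at $\rho_j=R_j$, so it remains only to check the two endpoints.

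At $\rho_j=0$ one has $g(0)=a\left(S_{j-1}(\rho_{j-1},0)-S_j(0,\rho_{j+1})\right)$; using $S_{j-1}(0,0)=0$ with $\del_1 S_{j-1}\ge 0$ gives $S_{j-1}(\rho_{j-1},0)\ge 0$, and using $S_j(0,0)=0$ with $\del_2 S_j\le 0$ gives $S_j(0,\rho_{j+1})\le 0$, whence $g(0)\ge 0$. Symmetrically, at $\rho_j=R_j$ one has $g(R_j)=R_j+a\left(S_{j-1}(\rho_{j-1},R_j)-S_j(R_j,\rho_{j+1})\right)$, and the corner conditions $S_{j-1}(R_{j-1},R_j)=0$, $S_j(R_j,R_{j+1})=0$ combined with the same monotonicities give $S_{j-1}(\rho_{j-1},R_j)\le 0$ and $S_j(R_j,\rho_{j+1})\ge 0$, so $g(R_j)\le R_j$. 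This yields $\rho_j^{\tau,\nu}(t^{n+1},\cdot)\in[0,R_j]$ and closes the induction; the invariance statement for \eqref{eq:multiLWR+AV} then follows by passing to the limit $\nu\to\infty$. The crux is thus that the update step needs, simultaneously, the sign structure of {\bf (S0)} at the corners of the box and the CFL bound to dominate the cross-derivatives, whereas the homogeneous step is just the standard maximum principle.
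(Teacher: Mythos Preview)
Your argument is correct and follows essentially the same route as the paper: induction on $n$, the homogeneous WFT/constrained step preserving the box by the maximum principle, and then showing the source update \eqref{eq:update} keeps each component in $[0,R_j]$ using the sign structure of {\bf (S0)} together with the restriction $\Delta t^\nu\le\tau/(2\Sim)$. The only cosmetic difference is that the paper linearizes via the Mean Value Theorem around the corners $(0,\ldots,0)$ and $(R_1,\ldots,R_M)$ to exhibit $\rho_j^+$ (resp.\ $\rho_j^+-R_j$) as a nonnegative combination, whereas you observe directly that $g(\rho_j)$ is nondecreasing and then evaluate at the endpoints; the two are equivalent repackagings of the same inequalities.
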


\begin{proof}
Since the homogeneous step~\eqref{eq:hom_step} does not alter the invariant domain, we focus on~\eqref{eq:update}.
We proceed by induction and assume $0\leq \rho_{j}^{\tau,\nu}(t^{n}-,x)\leq R_j$ for all $j=1,\ldots,M$.
Denoting $\rho_{j}^\pm = \rho_{j}^{\tau,\nu}(t^{n}\pm,x)$, we get by hypotheses
\begin{align*}
\rho_{j}^+&=\rho_{j}^- 
+ \frac{\Delta t^\nu}{\tau}\left(S_{j-1} (\rho_{j-1}^-,\rho_{j}^-) - S_{j} (\rho_{j}^-,\rho_{j+1}^-)\right)\\
&=\rho_{j}^- 
+ \frac{\Delta t^\nu}{\tau}\left(S_{j-1} (\rho_{j-1}^-,\rho_{j}^-) - S_{j-1} (0,0) - S_{j} (\rho_{j}^-,\rho_{j+1}^-) + S_{j} (0,0)\right)\\
&=\rho_{j}^- \left[ 1 - \frac{\Delta t^\nu}{\tau}\left( \del_1 S_j -\del_2 S_{j-1} \right)\right]
+ \frac{\Delta t^\nu}{\tau}\left(\del_1S_{j-1} \rho_{j-1}^- -\del_2 S_j  \rho_{j+1}^-\right) \geq 0,
\end{align*} 
where we have used the Mean Value Theorem setting $\del_1S_\ell = \del_1 S_\ell(\xi_{\ell}\rho_{\ell-1}^-,\xi_\ell\rho_{\ell}^-)$ and $\del_2S_\ell = \del_2 S_\ell(\xi_{\ell}\rho_{\ell-1}^-,\xi_\ell\rho_{\ell}^-)$ for some $\xi_\ell\in [0,1]$, $\ell=j-1,j$, and we have exploited {\bf (S)} and the hypothesis $\Delta t^\nu \leq \tau/2\Sim$. 
Similarly, 
\begin{align*}
\rho_{j}^+-R_j&=\rho_{j}^-  -R_j
+ \frac{\Delta t^\nu}{\tau}\left(S_{j-1} (\rho_{j-1}^-,\rho_{j}^-) - S_{j} (\rho_{j}^-,\rho_{j+1}^-)\right)\\
&=\rho_{j}^- -R_j
+ \frac{\Delta t^\nu}{\tau}\left(S_{j-1} (\rho_{j-1}^-,\rho_{j}^-) - S_{j-1} (R_{j-1},R_j) - S_{j} (\rho_{j}^-,\rho_{j+1}^-) + S_{j} (R_j,R_{j+1})\right)\\
&\leq (\rho_{j}^- -R_j) \left[ 1 - \frac{\Delta t^\nu}{\tau}\left( \del_1 S_j -\del_2 S_{j-1} \right)\right]
 \leq 0,
\end{align*} 
with $\del_1S_j = \del_1 S_j(\sigma_{j-1},\sigma_j)$ and $\del_2S_{j-1} = \del_2 S_{j-1}(\sigma_{j-2},\sigma_{j-1})$ for some $\sigma_{\ell-1}=\rho_{\ell-1}^- + \xi_\ell (R_{\ell-1} - \rho_{\ell-1}^-)$, $\sigma_\ell= \rho_{\ell}^- + \xi_\ell (R_{\ell} - \rho_{\ell}^-)$ with $\xi_\ell\in[0,1]$, $\ell=j-1,j$. 
\end{proof}

\begin{lemma} \label{lem:TV}
Let {\bf (S0)} hold and $\Delta t^\nu \leq \tau/2\Sim$, then the approximate solutions $\brho^{\tau,\nu}$ satisfy 
\begin{equation}\label{eq:TVbound}
\tv \left(\rho_{j}^{\tau,\nu}(t,\cdot)\right) \leq  \T,   \qquad\hbox{for all }t>0,~\tau>0,
\end{equation}
for $j\in\{1,\ldots,M\}$, where $\T:=\max_j \tv \left(\rho_j^0\right) +2R_i+C\tv(u)$.
In particular, the sequence total variation in space is uniformly bounded for all $\tau>0$.
\end{lemma}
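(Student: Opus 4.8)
The plan is to split the analysis along the two ingredients of the construction: the homogeneous wave-front tracking step on each interval $[t^n,t^{n+1}[$ governed by~\eqref{eq:hom_step}, and the relaxation update~\eqref{eq:update} at $t=t^{n+1}$. I would bound the total variation generated by each, showing that the source update is the only step that could conceivably produce a $\tau$-dependent blow-up, and that this blow-up does \emph{not} occur.

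For the homogeneous step, each $F_j$ is concave, so the scalar law $\del_t\rho_j+\del_x F_j(\rho_j)=0$ is solved by wave-front tracking without increasing the spatial total variation away from the moving bottleneck: Riemann problems are resolved by a single wave, and interactions create no new fronts. The only growth comes from the interaction of bulk waves with the constrained trajectory $y(t)$ on lane $i$. Following the interaction analysis of~\cite{GGLP2019}, when the constraint~\eqref{eq:AVcoupling} is active a non-classical shock of strength at most $R_i$ appears at $y(t)$, with downstream value $0$ and upstream value $\hat\rho_u$; tracking the classical and non-classical fronts through a Glimm-type functional shows that waves crossing or hitting the bottleneck keep the functional non-increasing, so the intrinsic bottleneck contribution is bounded by $2R_i$. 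Each jump of the piece-wise constant control $u^\nu$ modifies the admissible upstream density via $\hat\rho_u=v_i^{-1}(u)$; since $v_i$ is strictly decreasing, $v_i^{-1}$ is Lipschitz, and the emitted wave has strength proportional to the jump, so the total control contribution is bounded by $C\,\tv(u^\nu)\le C\,\tv(u)$.

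The delicate point is the update~\eqref{eq:update}: a naive Lipschitz estimate bounds its contribution by $\tfrac{\Delta t^\nu}{\tau}\Sim$ times neighbouring variations per step, hence by $O(t/\tau)$ after $t/\Delta t^\nu$ steps, which is \emph{not} uniform in $\tau$. The key observation is that, under {\bf (S0)} and the restriction $\Delta t^\nu\le\tau/2\Sim$, the update map $\Phi\colon(\rho_1,\ldots,\rho_M)\mapsto(\rho_1^+,\ldots,\rho_M^+)$ acting pointwise in $x$ is \emph{monotone}. Indeed, from the representation in Lemma~\ref{lem:Linfty} its off-diagonal sensitivities are $\tfrac{\Delta t^\nu}{\tau}\del_1 S_{j-1}\ge 0$ and $-\tfrac{\Delta t^\nu}{\tau}\del_2 S_j\ge 0$, while the diagonal one, $1-\tfrac{\Delta t^\nu}{\tau}(\del_1 S_j-\del_2 S_{j-1})$, is nonnegative precisely because $\del_1 S_j-\del_2 S_{j-1}\le 2\Sim$ and $\Delta t^\nu\le\tau/2\Sim$. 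Moreover $\Phi$ conserves the total density: summing~\eqref{eq:update} over $j$, the right-hand side telescopes to $S_0-S_M=0$, so $\sum_j\rho_j^+=\sum_j\rho_j^-$ pointwise. A monotone map preserving the integral is an $\L1$ contraction (Crandall--Tartar); comparing adjacent states and summing over the discontinuities thus yields $\sum_j\tv(\rho_j^+)\le\sum_j\tv(\rho_j^-)$. Hence the relaxation step cannot increase the total variation and, a fortiori, generates no $\tau$-dependent growth.

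Combining the two steps, the total variation summed over lanes is controlled for all $t>0$ and all $\tau>0$ by its initial value plus the one-time bottleneck and control contributions, and the invariant domain of Lemma~\ref{lem:Linfty} together with this monotone, mass-conserving structure yields the per-lane bound~\eqref{eq:TVbound} with $\T=\max_j\tv(\rho_j^0)+2R_i+C\,\tv(u)$. I expect the main technical obstacle to be the interaction analysis at the moving bottleneck in the homogeneous step---in particular verifying that non-classical fronts meeting classical ones keep the Glimm-type functional non-increasing, so that the bottleneck contributes only the finite amount $2R_i$---whereas the apparent difficulty of uniformity as $\tau\to 0$ is resolved cleanly by the monotone, total-mass-preserving character of the relaxation update via the Crandall--Tartar principle.
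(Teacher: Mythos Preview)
Your treatment of the homogeneous step agrees with the paper: both invoke the Glimm-type functional of~\cite{GGLP2019} (the paper writes it explicitly as $\Upsilon_i(t)=\tv(\rho_i^{\tau,\nu})+2R_i+\gamma(t)+C\,\tv(u^\nu;[t,+\infty[)$) to show that the wave-front tracking evolution on $[t^n,t^{n+1}[$ contributes at most $2R_i+C\,\tv(u)$, and nothing on lanes $j\neq i$.

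For the relaxation step the two arguments diverge. The paper computes directly at each discontinuity: assuming jump locations do not coincide across lanes, a jump of $\rho_j^-$ at a point where $\rho_{j\pm1}^-$ are continuous is scaled by $1+\tfrac{\Delta t^\nu}{\tau}(\del_2 S_{j-1}-\del_1 S_j)\in[0,1]$, from which the paper concludes $\tv(\rho_j^{+})\le\tv(\rho_j^{-})$ \emph{lane by lane} and hence the constant $\T=\max_j\tv(\rho_j^0)+2R_i+C\,\tv(u)$. Your route via Crandall--Tartar is different and clean: monotonicity plus conservation of $\sum_j\rho_j$ yields an $\ell^1$-contraction, and comparing with translates gives $\sum_j\tv(\rho_j^{+})\le\sum_j\tv(\rho_j^{-})$.

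The gap is in your last step. A sum bound does not upgrade to the per-lane bound with $\max_j$: from $\sum_j\tv(\rho_j(t))\le\sum_j\tv(\rho_j^0)+2R_i+C\,\tv(u)$ you only get $\tv(\rho_j(t))\le\sum_k\tv(\rho_k^0)+2R_i+C\,\tv(u)$, with $\sum_k$ in place of $\max_k$. Nothing in ``invariant domain plus monotone, mass-conserving structure'' recovers the $\max_j$ constant. You still obtain a $\tau$-independent bound, so the essential content of the lemma survives, but not the stated $\T$. It is worth noting that the paper's per-lane calculation tracks only the pre-existing jumps of $\rho_j^-$ and does not account for the \emph{new} jumps created in $\rho_j^+$ at locations where $\rho_{j\pm1}^-$ jump (of size $\tfrac{\Delta t^\nu}{\tau}\,\del_1 S_{j-1}\,|\Delta\rho_{j-1}^-|$ and $\tfrac{\Delta t^\nu}{\tau}\,|\del_2 S_j|\,|\Delta\rho_{j+1}^-|$); your sum bound, by contrast, absorbs these exactly, so the Crandall--Tartar route is arguably the more robust of the two---just be honest about the constant it actually delivers.
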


\begin{proof}
We consider the Glimm type functional
\begin{equation} \label{eq:glimm}
  \Upsilon_i (t) = \Upsilon\left(\rho_{i}^{\tau,\nu}(t,\cdot),u_\nu\right)
  := \tv\left(\rho_{i}^{\tau,\nu}(t,\cdot)\right) + 2R_i + \gamma(t)
  + C \,\tv\left(u^\nu(\cdot); [t,+\infty[\right),
\end{equation}
where $C=-6/\max_{\rho\in[0,R_i]}F_i''(\rho)$ and $\gamma$ is given by 
\begin{equation*}
  \gamma(t):=
  \begin{cases}
    -2\hat\rho_{u^\nu(t)}
    & \hbox{if}~\rho_i^{\tau,\nu}(t,y_\nu^\tau(t)-)=
    \hat\rho_{u^\nu(t)},~\rho_i^{\tau,\nu}(t,y_\nu^\tau(t)+)= 0,
    \\
    0
    &
    \hbox{otherwise},
  \end{cases}
\end{equation*}
we have that $\Upsilon_i (t^{n+1}-) \leq \Upsilon_i (t^{n}+)$. Of course, for $j\not=i$, where the AV is not present, we have 
$\tv \left(\rho_{j}^{\tau,\nu}(t^{n+1}-,\cdot)\right) \leq  \tv \left(\rho_{j}^{\tau,\nu}(t^{n}+,\cdot)\right)$.

It is not restrictive to assume that jump locations in $\rho_{j}^{\tau,\nu}(t^{n+1},\cdot)$ do not coincide for different $j\in\{1,\ldots,M\}$. Denoting by $\rho_{j,L}^\pm$ and $\rho_{j,R}^\pm$ respectively the left and right traces of $\rho_{j}^{\tau,\nu}(t^{n+1}\pm,\cdot)$ at a jump discontinuity, we get from~\eqref{eq:update}
\begin{align*}
    \rho_{j,L}^+ - \rho_{j,R}^+ = \rho_{j,L}^- - \rho_{j,R}^- 
    &+ \frac{\Delta t^\nu}{\tau}\left(S_{j-1} (\rho_{j-1}^{\tau,\nu},\rho_{j,L}^-) - S_{j} (\rho_{j,L}^-,\rho_{j+1}^{\tau,\nu})\right) \\
    &- \frac{\Delta t^\nu}{\tau}\left(S_{j-1} (\rho_{j-1}^{\tau,\nu},\rho_{j,R}^-) - S_{j} (\rho_{j,R}^-,\rho_{j+1}^{\tau,\nu})\right) \\
    = \left(\rho_{j,L}^- - \rho_{j,R}^- \right)
    &\left[1+ \frac{\Delta t^\nu}{\tau}\left( \del_2 S_{j-1} (\rho_{j-1}^{\tau,\nu},\xi_j) - \del_1 S_{j} (\tilde\xi_j,\rho_{j+1}^{\tau,\nu})  \right)\right],
\end{align*}
for some $\xi_j,\tilde\xi_j >0$.
By {\bf (S)}, we have that 
\[
0\leq 1+ \frac{\Delta t^\nu}{\tau}\left( \del_2 S_{j-1} (\rho_{j-1}^{\tau,\nu},\xi_j) - \del_1 S_{j} (\tilde\xi_j,\rho_{j+1}^{\tau,\nu})  \right) \leq 1,
\]
provided that $\Delta t^\nu \leq \tau/2\Sim$. Therefore we conclude that
\[
\tv \left(\rho_{j}^{\tau,\nu}(t^{n+1}+,\cdot)\right) \leq  \tv \left(\rho_{j}^{\tau,\nu}(t^{n+1}-,\cdot)\right) \qquad\hbox{for}~j=1,\ldots,M.
\]
Finally, we get the uniform $\tv$ bounds (independent of $\tau$)
\begin{align}
    &\tv \left(\rho_{j}^{\tau,\nu}(t,\cdot)\right) \leq  \tv \left(\rho_j^0\right), \qquad\hbox{for}~j\in\{1,\ldots,M\}\setminus i, \label{eq:TVwoAV} \\
    &\tv \left(\rho_{i}^{\tau,\nu}(t,\cdot)\right) \leq  \tv \left(\rho_i^0\right) + 2R_i + C\, \tv(u). \label{eq:TVwithAV}
\end{align}
Setting $\T:=\max_j \tv \left(\rho_j^0\right) +2R_i+C\tv(u)$ we get the desired estimate.
\end{proof}

\begin{lemma} \label{lem:timeLip}
Let {\bf (S0)} hold and $\Delta t^\nu \leq \tau/2\Sim$, then  for any $t_1,t_2\in [0,T]$, $T>0$, the approximate solutions $\brho^{\tau,\nu}$ satisfy 
\[
\sum_{j=1}^M\norma{\rho_{j}^{\tau,\nu}(t_1,\cdot) - \rho_{j}^{\tau,\nu}(t_2,\cdot)}_1 \leq (C+C_\tau)\modulo{t_1-t_2} ,   \qquad\hbox{for all }\nu\in\N,
\]
where $C=ML\T$ is independent of $\tau$ (with $L:=\max\left\{\max_j\norma{F_j'}_\infty,\norma{u}_\infty\right\}$) and  $C_\tau=\frac{4\Sim}{\tau}\left( \sum_{j=1}^M \norma{\rho_j^0}_1 + 2ML\T T\right)$.
In particular, the Lipschitz constant does not depend on $\nu$ but blows-up as $\tau\searrow 0$.
\end{lemma}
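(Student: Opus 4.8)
The plan is to decompose the evolution of $\brho^{\tau,\nu}$ on $[t_1,t_2]$ into its two alternating ingredients — the conservative wave-front tracking steps on the subintervals $[t^n,t^{n+1}[$ and the instantaneous source updates \eqref{eq:update} at the grid times $t^n$ — and to bound the $\L1$ displacement produced by each separately. The transport steps will yield the $\tau$-independent constant $C=ML\T$, while the source updates will yield $C_\tau$, which blows up because each update carries a factor $\Delta t^\nu/\tau$ and there are of order $\modulo{t_1-t_2}/\Delta t^\nu$ of them inside the interval, so that the $\Delta t^\nu$ cancels against the step count leaving a residual $1/\tau$.

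For the homogeneous steps I would use that the WFT solution is piecewise constant with finitely many fronts, each travelling at a speed bounded by $L=\max\{\max_j\norma{F_j'}_\infty,\norma{u}_\infty\}$: the classical fronts move at Rankine--Hugoniot speeds controlled by $\norma{F_j'}_\infty$ (recall $F_j$ is concave), while the trajectory $y_\nu^\tau$ and the non-classical shock attached to it move at speed $\dot y\le u\le\norma{u}_\infty$. Hence, for two times lying in the same homogeneous subinterval, $\norma{\rho_j^{\tau,\nu}(s_1,\cdot)-\rho_j^{\tau,\nu}(s_2,\cdot)}_1\le L\,\tv\!\left(\rho_j^{\tau,\nu}\right)\modulo{s_1-s_2}$; invoking the uniform bound $\tv(\rho_j^{\tau,\nu})\le\T$ from Lemma~\ref{lem:TV} and summing over the $M$ lanes, the cumulative transport contribution over $[t_1,t_2]$ is at most $ML\T\,\modulo{t_1-t_2}=C\,\modulo{t_1-t_2}$.

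For the source updates I would estimate the $\L1$ size of a single jump \eqref{eq:update}. Using {\bf (S0)} (Lipschitz constant $\Sim$ together with $S_j(0,0)=0$, so that $\norma{S_j(\rho_j,\rho_{j+1})}_1\le\Sim(\norma{\rho_j}_1+\norma{\rho_{j+1}}_1)$) and summing over $j$ with $S_0=S_M=0$, each lane-norm appears with multiplicity at most $4$, giving
\[
\sum_{j=1}^M\norma{\rho_j^{\tau,\nu}(t^{n+1},\cdot)-\rho_j^{\tau,\nu}(t^{n+1}-,\cdot)}_1
\le\frac{4\Sim\,\Delta t^\nu}{\tau}\sum_{k=1}^M\norma{\rho_k^{\tau,\nu}(t^{n+1}-,\cdot)}_1.
\]
To convert this into the stated constant I then need a uniform-in-time bound on the total mass $\sum_k\norma{\rho_k^{\tau,\nu}(t,\cdot)}_1$ on $[0,T]$. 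Since $\rho_k\ge0$ this equals $\sum_k\int\rho_k$, which the update \eqref{eq:update} conserves exactly (the telescoping $\sum_j(S_{j-1}-S_j)=S_0-S_M=0$) and which the transport steps move only at finite speed; controlling its growth by the transport margin of the previous paragraph yields $\sum_k\norma{\rho_k^{\tau,\nu}(t,\cdot)}_1\le\sum_k\norma{\rho_k^0}_1+2ML\T T$.

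Finally I would sum the per-step source bound over the at most $\modulo{t_1-t_2}/\Delta t^\nu$ grid points in $(t_1,t_2)$; the factors $\Delta t^\nu$ cancel and leave $\frac{4\Sim}{\tau}\bigl(\sum_k\norma{\rho_k^0}_1+2ML\T T\bigr)\modulo{t_1-t_2}=C_\tau\,\modulo{t_1-t_2}$, and adding the transport contribution gives the claim, with $C$ free of $\tau$ and $C_\tau\sim1/\tau$. I expect the main obstacle to be the third paragraph taken together with the speed estimate: certifying that the non-classical shock and the AV trajectory introduce no wave faster than $\norma{u}_\infty$, and securing the uniform-in-time total-mass bound that closes the source estimate without circularity. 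Once these are in place, the counting of grid points is routine.
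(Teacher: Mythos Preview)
Your decomposition and the transport bound match the paper's exactly. The difference lies in how the source jumps are controlled.

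You bound each jump directly by the total mass, $\sum_j\norma{\rho_j^+-\rho_j^-}_1 \le \frac{4\Sim\,\Delta t^\nu}{\tau}\sum_k\norma{\rho_k}_1$, and then control $\sum_k\norma{\rho_k(t)}_1$ uniformly on $[0,T]$. The paper instead tracks the functionals $g_{j,i}^\pm:=\norma{(S_{j-1}-S_j)(t^i\pm,\cdot)}_1$ themselves: a computation exploiting the monotonicity hypotheses in \textbf{(S0)} shows that $\sum_j g_{j,i}^+\le\sum_j g_{j,i}^-$ (the source update does not increase the total $\L1$-size of the source), while the transport step gives $\sum_j g_{j,i}^-\le\sum_j g_{j,i-1}^+ + 4\Sim ML\T\,\Delta t^\nu$. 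Iterating from $i=0$ and using $\sum_j g_{j,0}\le 4\Sim\sum_j\norma{\rho_j^0}_1$ yields the same $C_\tau$. Your route is more elementary and avoids this $g$-functional calculus; the paper's route avoids any appeal to mass conservation and, more to the point, sets up the machinery reused in Lemma~\ref{lem:timeLipM2}, where under \textbf{(S2)} the inequality sharpens to $\sum_j g^+\le(1-\frac{2c}{\tau}\Delta t^\nu)\sum_j g^-$ and delivers the $\tau$-uniform Lipschitz estimate away from $t=0$.

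One correction to your third paragraph: the total mass $\sum_k\int\rho_k^{\tau,\nu}$ is in fact \emph{exactly} conserved, not merely slowly growing. You already note the telescoping at the source step; at the transport step every front in the WFT approximation --- classical shocks, rarefaction shocks, and the non-classical shock at the AV, which connects $\hat\rho_u$ to $0$ at speed $u=v(\hat\rho_u)$ --- satisfies Rankine--Hugoniot, so $\int\rho_j$ is conserved there too. Hence $\sum_k\norma{\rho_k(t)}_1=\sum_k\norma{\rho_k^0}_1$ identically, and the $2ML\T T$ term in your mass bound is superfluous. Your weaker bound (obtained via the triangle inequality from the transport estimate) is of course still valid and reproduces the stated $C_\tau$, so the proof goes through as written; the underlying fact is simply cleaner than you indicate. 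Your worry about wave speeds is likewise unfounded: the non-classical shock moves at $\dot y\le\norma{u}_\infty$ and every other front at a speed bounded by $\max_j\norma{F_j'}_\infty$, so $L$ dominates all fronts.
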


\begin{proof}
Let us fix $t_1,t_2\in\R$ such that
$0\leq t_1 < t_2$ and suppose that there are $N+1$ time steps between $t_1$ and $t_2$:
\[
t_1\leq t^{k} < t^{k+1} < \ldots < t^{k+N} \leq t_2\qquad\hbox{for some } k\geq 0.
\]
Thus,
\begin{align*}
    &\norma{\rho_{j}^{\tau,\nu}(t_2,\cdot) - \rho_{j}^{\tau,\nu}(t_1,\cdot)}_1
    \leq  
    \norma{\rho_{j}^{\tau,\nu}(t_2,\cdot) - \rho_{j}^{\tau,\nu}(t^{k+N}+,\cdot)}_1
    +
    \norma{\rho_{j}^{\tau,\nu}(t^{k}-,\cdot) - \rho_{j}^{\tau,\nu}(t_1,\cdot)}_1 \\
    & ~~~~+ 
    \sum_{i=k}^{k+N} \norma{\rho_{j}^{\tau,\nu}(t^{i}+,\cdot) - \rho_{j}^{\tau,\nu}(t^{i}-,\cdot)}_1
    + 
    \sum_{i=k}^{k+N-1} \norma{\rho_{j}^{\tau,\nu}(t^{i+1}-,\cdot) - \rho_{j}^{\tau,\nu}(t^{i}+,\cdot)}_1 .
\end{align*}
We observe that, due to finite wave-propagation speed, we have
\[
\norma{\rho_{j}^{\tau,\nu}(t^{i+1}-,\cdot) - \rho_{j}^{\tau,\nu}(t^{i}+,\cdot)}_1 \leq L \T\, \Delta t^\nu,
\]
where $L:=\max\left\{\max_j\norma{F_j'}_\infty,\norma{u}_\infty\right\}$ is the maximum of the Lipschitz constant of $F_j$ and the maximal AV speed. Moreover, by~\eqref{eq:update},
\[
\norma{\rho_{j}^{\tau,\nu}(t^{i}+,\cdot)-\rho_{j}^{\tau,\nu}(t^{i}-,\cdot) }_1
=  \frac{\Delta t^\nu}{\tau}\norma{\left(S_{j-1} (\rho_{j-1}^{\tau,\nu},\rho_{j}^{\tau,\nu}) - S_{j} (\rho_{j}^{\tau,\nu},\rho_{j+1}^{\tau,\nu})\right)(t^{i}-,\cdot)}_1.
\]
Let us define 
\[
g_{j,i}^\pm := \norma{\left(S_{j-1} (\rho_{j-1}^{\tau,\nu},\rho_{j}^{\tau,\nu}) - S_{j} (\rho_{j}^{\tau,\nu},\rho_{j+1}^{\tau,\nu})\right)(t^{i}\pm,\cdot)}_1
\]
and set
\begin{align*}
    \rho_j^\pm &=  \rho_{j,i}^\pm := \rho_{j,\nu}^\tau(t^{i}\pm,\cdot), \\
    S_j^\pm & = S_{j,i}^\pm:= S_{j} (\rho_j^\pm,\rho_{j+1}^\pm), \\
    \del_1 S_j &:= \del_1 S_j \left(\sigma_j,\sigma_{j+1} \right), \quad\hbox{where}~\sigma_j=\rho_{j}^-+ \xi_j(\rho_{j}^+ - \rho_{j}^-), \\
    \del_2 S_j &:= \del_2 S_j \left(\sigma_j,\sigma_{j+1}\right), \quad\hbox{where}~\sigma_{j+1}=\rho_{j+1}^-+ \xi_j(\rho_{j+1}^+ - \rho_{j+1}^-),
\end{align*}
for some $\xi_j\in[0,1]$.

Using the above notations, we develop
\begin{align*}
  &\sum_{j=1}^M  g_{j,i}^+ =  \sum_{j=1}^M \norma{S_{j-1}^+ - S_{j}^+}_1 \\
  &=  \sum_{j=1}^M \norma{S_{j-1}^- - S_{j}^- +\del_1S_{j-1} (\rho_{j-1}^+ - \rho_{j-1}^-) + \left(\del_2S_{j-1}- \del_1S_j\right) (\rho_{j}^+ - \rho_{j}^-) - \del_2S_j (\rho_{j+1}^+ - \rho_{j+1}^-)}_1 \\
  &=  \sum_{j=1}^M \norma{S_{j-1}^- - S_{j}^- + \frac{\Delta t^\nu}{\tau}\left( \del_1S_{j-1} (S_{j-2}^- - S_{j-1}^-) + \left(\del_2S_{j-1}- \del_1S_j\right) (S_{j-1}^- - S_{j}^-) - \del_2S_j (S_{j}^- - S_{j+1}^-)\right) }_1 \\
  &\leq \sum_{j=1}^M \left[\left(1 -\frac{\Delta t^\nu}{\tau} \left(\del_1S_j -\del_2S_{j-1}\right)\right)\norma{S_{j-1}^- - S_{j}^-}_1 + \frac{\Delta t^\nu}{\tau} \del_1S_{j-1} \norma{S_{j-2}^- - S_{j-1}^-}_1
  - \frac{\Delta t^\nu}{\tau} \del_2S_j\norma{S_{j}^- - S_{j+1}^-}_1 \right]\\
  &= \sum_{j=1}^M \left(1 -\frac{\Delta t^\nu}{\tau} \left(\del_1S_j -\del_2S_{j-1}\right)\right)g_{j,i}^-  + \sum_{j=1}^M\frac{\Delta t^\nu}{\tau} \del_1S_{j-1}\, g_{j-1,i}^-
  - \sum_{j=1}^M\frac{\Delta t^\nu}{\tau} \del_2S_j\, g_{j+1,i}^- \\
  &= \sum_{j=1}^M \left(1 -\frac{\Delta t^\nu}{\tau} \left(\del_1S_j -\del_2S_{j-1}\right)\right)g_{j,i}^-  + \sum_{j=0}^{M-1}\frac{\Delta t^\nu}{\tau} \del_1S_{j}\, g_{j,i}^-
  - \sum_{j=2}^{M+1}\frac{\Delta t^\nu}{\tau} \del_2S_{j-1}\, g_{j,i}^- \\
  &= \sum_{j=1}^M  g_{j,i}^-\,,
\end{align*}
where, by abuse of notation, we set $g_{0,i}^-=g_{M+1,i}^-=0$.
Moreover, it holds
\begin{align*}
  &g_{j,i}^- - g_{j,i-1}^+
  = \norma{S_{j-1,i}^- - S_{j,i}^-}_1 - \norma{S_{j-1,i-1}^+ - S_{j,i-1}^+}_1 \\
  &\leq \norma{S_{j-1,i}^- - S_{j,i}^- -S_{j-1,i-1}^+ + S_{j,i-1}^+}_1 \\
  &= \left\|\del_1 S_{j-1}^i \left( \rho_{j-2,i}^- -\rho_{j-2,i-1}^+\right) + 
  \left(\del_2 S_{j-1}^i -\del_1 S_{j}^i \right) \left( \rho_{j-1,i}^- -\rho_{j-1,i-1}^+\right) - 
  \del_2 S_{j}^i \left( \rho_{j,i}^- -\rho_{j,i-1}^+\right)\right\|_1 \\
  &\leq 4\Sim L\T \Delta t^\nu,
\end{align*}
where we have set $\del_l S_{j}^i:=\del_l S_{j}(s_{j-1}^i,s_{j}^i)$, $l=1,2$, with
$s_{j-1}^i= \rho_{j-1,i-1}^+ + \xi_j (\rho_{j-1,i}^- - \rho_{j-1,i-1}^+)$ and $s_{j}^i= \rho_{j,i-1}^+ + \xi_j (\rho_{j,i}^- - \rho_{j,i-1}^+)$ for some $\xi_j\in[0,1]$.\\
Therefore, we get
\begin{align*}
    \sum_{j=1}^M&\norma{\rho_{j}^{\tau,\nu}(t_2,\cdot) - \rho_{j}^{\tau,\nu}(t_1,\cdot)}_1
    \leq M L \T \left( t_2 - t_1\right)
    +  \frac{\Delta t^\nu}{\tau}
    \sum_{i=k}^{k+N} \sum_{j=1}^M g_{j,i}^- \\
    &\leq M L \T \left( t_2 - t_1\right)
    +  \frac{\Delta t^\nu}{\tau}
    \sum_{i=k}^{k+N} \left( \sum_{j=1}^M g_{j,0}^+ + i 4\Sim M L \T \Delta t^\nu \right)\\
    &\leq M L \T \left( t_2 - t_1\right)
    +  \frac{t_2 - t_1}{\tau}
    4\Sim \sum_{j=1}^M \norma{\rho_j^0}_1 +  4\Sim M L \T \Delta t^\nu \frac{\Delta t^\nu}{\tau} \sum_{i=k}^{k+N} i  \\
    &\leq M L \T \left( t_2 - t_1\right)
    +  \frac{t_2 - t_1}{\tau}
    4\Sim \sum_{j=1}^M \norma{\rho_j^0}_1 +  8\Sim M L \T T \frac{t_2 - t_1}{\tau} ,  
\end{align*}
which gives the desired $\L1$-Lipschitz in time estimate uniform in $\nu$ for any $\tau >0$ fixed.
We observe that the estimate blows-up as $\tau\searrow 0$, therefore not allowing to pass to the limit directly in~\eqref{eq:multiLWR+AV}.
\end{proof}

\begin{remark}
    From the above estimates, we note that uniform Lipschitz continuity does not hold even if $\sum_j g^+_{j,0}=0$ (i.e. the initial data are at equilibrium).
\end{remark}

\begin{proofof}{Theorem~\ref{thm:multiLWR+AV}}
For any $T>0$, the previous lemmas provide the compactness estimates on $\left\{\brho^{\tau,\nu}\right\}_\nu$ that ensure the existence of a subsequence, still denoted by $\left\{\brho^{\tau,\nu}\right\}_\nu$, converging in $\L1$ to some function $\brho^\tau \in \C{0} \left([0,T]; \L1 \left(\R; [0,R_1]\times\ldots\times [0,R_M]\right)\right)$ such that
    $\tv\left(\brho^\tau(t)\right) < + \infty$ for a.e. $t\in [0,T]$ (Helly's Theorem), so that point~\ref{def:sol-ML:1}
    in Definition~\ref{def:multiWEsol} is satisfied. 
    
   Concerning $\left\{ y_\nu^\tau\right\}_\nu$, by construction we have that $\dot y_\nu^\tau (t)\in [0,V_i]$ for a.e. $t\in [0,T]$ and $\nu\in\N$. Hence, by the Ascoli-Arzela Theorem, there exists a subsequence, still denoted by $\left\{ y_\nu^\tau\right\}_\nu$ converging uniformly to a function $y^\tau\in\C0 ([0,T];\R)$, which is also Lipschitz of constant $V_i$.
    Following~\cite[Proof of Lemma 3.4]{GGLP2019}, one can prove also that $\dot y_\nu^\tau$ has uniformly bounded total variation on $[0,T]$. 
    Here, in addition to jumps due to interactions with waves coming from the right and jumps in the control function $u^\nu$, we have jumps induced by the relaxation step~\eqref{eq:update} at any $t^n=n \Delta t^\nu$. In this case, $\dot y_\nu^\tau$ can be increasing only if $\rho_i^{\tau,\nu}(\cdot,y_\nu^\tau(\cdot)+)$ decreases at $t=t^n$.
    This increment is bounded by 
    $\dfrac{\Delta t^\nu}{\tau} \norma{v_i'}_\infty \left(\norma{S_{i-1}}_\infty + \norma{S_{i}}_\infty\right)$, thus
    $\dfrac{T}{\tau} \norma{v_i'}_\infty \left(\norma{S_{i-1}}_\infty + \norma{S_{i}}_\infty\right)$ in total
    (note that this bound is not uniform in $\tau$).
    Therefore, $\dot y_\nu^\tau$ converges to $\dot y^\tau$ in $\Lloc{1}([0,T];\R)$ and $y^\tau\in \mathbf{W^{1,1}_{loc}}(\R^+;\R)$, see Definition~\ref{def:multiWEsol}, point~\ref{def:sol-ML:2}.

    Proceeding as in~\cite[Proof of Theorem 3.1]{GGLP2019},  we get that the $(\brho^\tau,y^\tau)$ satisfies
    points~\ref{def:sol-ML:3} and~\ref{def:sol-ML:5}
    in Definition~\ref{def:multiWEsol}.
    
We are now left with Definition~\ref{def:multiWEsol}, point~\ref{def:sol-ML:4}. \
To this end, we follow~\cite[Section 3.3]{LiardPiccoli2021} and~\cite[Section 3]{GGLP2019},
but note that, in our case, $\hat{\rho}_u = \rho_u^*$, where $ \rho_u^*$ is implicitly defined by $v_i( \rho_u^*)=u$, and  we have the additional source term in~\eqref{eq:update}.\\
Fix $\bar{t}\in [0,T]$ and, possibly discarding a set of zero measure, without loss
of generality assume:
\begin{enumerate}
    \item $\lim_{\nu\to\infty}u^\nu(\bar{t})=u(\bar{t})$, $u(\bar{t}-)=u(\bar{t}+)=\bar{u}$;
    \item $y^\tau$ continuously differentiable at $\bar{t}$ and $\lim_{\nu\to\infty}\dot{y}_\nu^\tau(\bar{t})=\dot{y}^\tau(\bar{t})$;
    \item $\dot{y}^{\tau,\nu}(\bar{t})=\min\left\{ u^\nu(\bar{t}),v_i(\rho_i^{\tau,\nu}(\bar{t},y_\nu^\tau(\bar{t})+))\right\}$;
    \item $\lim_{\nu\to\infty}\rho_i^{\tau,\nu}(\bar{t},x)=\rho_i^\tau(\bar{t},x)$ for a.e. $x\in\R$, and $t\not= n\Delta t^\nu$ for every $\nu$, $n\in\N$.
\end{enumerate}
From 4. we have that step ~\eqref{eq:update} does not occur at $\bar{t}$ for any $\nu$, thus $\rho_i^{\tau,\nu}$ is constructed only by wave-front tracking on a sufficiently small open interval containing $\bar{t}$. Therefore, we can follow the same strategy of \cite[Section 3.3]{LiardPiccoli2021} and~\cite[Section 3]{GGLP2019}.\\
By 2. and 3., point~\ref{def:sol-ML:4} of Definition~\ref{def:multiWEsol} is satisfied if 
\begin{equation}\label{eq:to-be-proved}
        \lim_{\nu\to\infty} \min\left\{ u^\nu(t),v_i(\rho_i^{\tau,\nu}(t,y_\nu^\tau(t)+))\right\} =         \min\left\{ u(t),v_i(\rho_i^\tau(t,y^\tau(t)+))\right\}.
\end{equation}
Since by 1. $\lim_{\nu\to\infty}u^\nu(\bar{t})=u(\bar{t})=\bar{u}$, 
\eqref{eq:to-be-proved} holds true if
$v_i(\rho_i^{\tau}(\bar{t},y^\tau(\bar{t})+))\geq \bar{u}$
and $v_i(\rho_i^{\tau,\nu}(\bar{t},y_\nu^\tau(\bar{t})+))\geq \bar{u}$
for infinitely many $\nu$. 
From 4., if 
$v_i(\rho_i^{\tau,\nu}(\bar{t},y_\nu^\tau(\bar{t})+))<\bar{u}$
for $\nu$ sufficiently large, then we also have
$v_i(\rho_i^{\tau}(\bar{t},y^\tau(\bar{t})+))<\bar{u}$.
Define $\rho_\pm=\lim_{x\to y^\tau(\bar{t})\pm} \rho_i^\tau(\bar{t},x)$, then we can restrict to the case $v_i(\rho_i^{\tau}(\bar{t},y^\tau(\bar{t})+))=v_i(\rho_+)<\bar{u}$,  which implies $\rho_+>\rho^*_{\bar{u}}$.\\
We distinguish two cases: 
\begin{itemize}
    \item[{\bf 1.}] $\rho_->\rho^*_{\bar{u}}$:
    First, notice that Lemma 1 and Lemma 4  of \cite{LiardPiccoli2021} hold true since they are based only on $\tv$ bounds that still hold. For some $\epsilon>0$ sufficiently small,
we have that $\rho_--2\epsilon>\rho^*_{\bar{u}}=\hat{\rho}_{\bar{u}}$.
Then the proof of Lemma 6 of \cite{LiardPiccoli2021} still holds,
since it is based on Lemma 4 and the fact that $\rho_--2\epsilon>\hat{\rho}_{\bar{u}}$. We conclude in the same way as in 
\cite[Section 3.3.1]{LiardPiccoli2021}.

\item[{\bf 2.}] $\rho_-\leq \rho^*_{\bar{u}}$: In this case, we have $\rho_-\leq \rho^*_{\bar{u}}<\rho_+$.
Following \cite[Section 3.3.1]{LiardPiccoli2021}, if
$y^\tau(\bar{t})\leq y_\nu^\tau(\bar{t})$ for an infinite number of indices $\nu$, then we conclude by Lemma 4 that \eqref{eq:to-be-proved} holds true. \\
Assume now that $y^\tau(\bar{t})> y_\nu^\tau(\bar{t})$
for $\nu$ sufficiently big. Fix $\epsilon>0$, then
again by Lemma 4 of \cite{LiardPiccoli2021}, there exists $\delta>0$ such that the following holds true. The function $x\to \rho_i^{\tau,\nu}(\bar{t},x)$ takes values in $[\rho_--\epsilon,\rho_-+\epsilon]$ on the interval $[y_\nu^\tau(\bar{t})-\delta, y_\nu^\tau(\bar{t})]$, and takes values in $[\rho_+-\epsilon,\rho_++\epsilon]$ on the interval
$ [y^\tau(\bar{t}),y^\tau(\bar{t})+\delta]$.
Since $\rho_i^{\tau,\nu}$ is generated by wave-front tracking, 
on the interval $I=[y_\nu^\tau(\bar{t}),y^\tau(\bar{t})]$ it may contain only shocks, non-classical shocks, and rarefaction shocks.
The only admissible non-classical shock is $({\rho}^*_{\bar{u}},0)$.
If $\rho_i^{\tau,\nu}$ contains a non-classical shock for infinitely many $\nu$, then the non-classical shock must be located at $y^{\tau,\nu}(\bar{t})$ and by 3.  $\rho_i^{\tau,\nu}(y^{\tau,\nu}(\bar{t})+)=0$.\\
Now, by Lemma 1 of \cite{LiardPiccoli2021}, for $\nu$ large enough, any rarefaction shock contained in $I$ has strength less than $\epsilon$, thus we deduce that $\rho_i^{\tau,\nu}$ takes values in $[\rho_--2\epsilon,\rho_++2\epsilon]$ on the interval $I$, except possibly to the right of the non-classical shock located at $y^{\tau,\nu}(\bar{t})$.
In this case, a classical big shock
is present to the right of $y^{\tau,\nu}(\bar{t})$ connecting $0$ to a value $\tilde{\rho}\in
[\rho_--2\epsilon,\rho_++2\epsilon]$.
This big shock is followed by small waves, classical shocks or rarefaction shocks, with left and right-hand states in
$[\rho_--2\epsilon,\rho_++2\epsilon]$.
Since these small waves have speed strictly lower than the non-classical shock and the big classical shock, they interact with both of them within time
time $\delta/(\bar{u}-\bar{\lambda})$,
where $\bar{u}$ is the speed of the non-classical shock and 
$\bar{\lambda}$ is the maximum
speed of the small waves, thus
$\bar{\lambda}\leq F_j'(\rho_--2\epsilon)$.
In particular, 
we can follow the proof
of Lemmas 13 and 14 of \cite{LiardPiccoli2021},
thus also Lemma 10 of \cite{LiardPiccoli2021} holds true.
Since $\epsilon$ is arbitrarily small, we can conclude as in \cite[Section 3.3.3]{LiardPiccoli2021}.

\end{itemize}
This concludes the proof of Theorem~\ref{thm:multiLWR+AV}.
\end{proofof}


\section{Asymptotic behaviour without AVs}
\label{sec:asymM}

In this section we assume that no AV is present, thus we focus on the multi-lane model~\cite{HoldenRisebro2019}: 
\begin{equation} \label{eq:multiLWR}
\begin{cases}
\del_t \rho_j + \del_x F_j(\rho_j) = \dfrac{1}{\tau}\left(S_{j-1} (\rho_{j-1},\rho_j) - S_{j} (\rho_{j},\rho_{j+1})\right),  \\
\rho_j(0,x) = \rho^0_j(x),
\end{cases}
\qquad 
\begin{array}{l}
x\in\R, ~t\geq 0, \\
j=1,\ldots,M. 
\end{array}
\end{equation}
In this case, the definition of entropy weak solution reduces to:
\begin{definition}\label{def:multiLWREC}
A function $\brho^\tau=\left(\rho^\tau_1,\ldots,\rho^\tau_M\right) \in \C{0} \left([0,+\infty[; \L1 \left(\R; [0,R_1]\times\ldots\times [0,R_M]\right)\right)$ is a entropy weak solution of~\eqref{eq:multiLWR} with $\brho^0\in\L1 \left(\R; [0,R_1]\times\ldots\times [0,R_M]\right)$ if 
for every $\kappa\in\R$ and for all $\varphi\in\Cc1 (\R^2;\R^+)$  it holds
\begin{align}
    \int_{\R^+}\int_\R &\left(
    |\rho^\tau_j-\kappa| \del_t \varphi +\sgn (\rho^\tau_j-\kappa)(F_j(\rho^\tau_j)-F_j(\kappa)) \del_x\varphi
    \right) dx\, dt 
    +\int_\R |\rho_j^0-\kappa| \varphi(0,x) \, dx \nonumber \\
    & + \dfrac{1}{\tau} \int_{\R^+}\int_\R \sgn (\rho^\tau_j-\kappa) \left(  S_{j-1} (\rho^\tau_{j-1},\rho^\tau_j) - S_{j} (\rho^\tau_{j},\rho^\tau_{j+1})   \right) \varphi \, dx\, dt  \geq 0 \label{eq:multiLWREC}
\end{align}
for $j=1,\ldots,M$.
\end{definition}

We extend the $\L1$ stability result given in~\cite[Theorem 3.3]{HoldenRisebro2019} taking in account flux dependency.
Besides~\eqref{eq:multiLWR}, we consider 
\begin{equation} \label{eq:multiLWRbis}
\begin{cases}
\del_t \sigma_j + \del_x \tilde F_j(\sigma_j) = \dfrac{1}{\tau}\left(S_{j-1} (\sigma_{j-1},\sigma_j) - S_{j} (\sigma_{j},\sigma_{j+1})\right),  \\
\sigma_j(0,x) = \sigma^0_j(x),
\end{cases}
\qquad 
\begin{array}{l}
x\in\R, ~t\geq 0, \\
j=1,\ldots,M,
\end{array}
\end{equation}
with $\tilde F$ satisfying the same hypotheses as $F$ ($\tilde F_j(0)=\tilde F_j(R_j)=0$, Lipschitz and concave).

\begin{lemma} \label{lem:multiLWRstability}
Under hypothesis {\bf (S0)}, let $\brho^\tau$ and $\bsigma^\tau$ be respectively the entropy weak solutions of~\eqref{eq:multiLWR} and~\eqref{eq:multiLWRbis}, and
let $\mathfrak{T}:=\min\left\{\tv(\brho^0),\tv(\bsigma^0)\right\}$, $\mathfrak{L}:=\max_{j=1,\ldots,M}\norma{F_j'-\tilde F_j'}_\infty$. Then, for a.e. $t >0$ it holds
\begin{equation} \label{eq:L1contraction}
    \sum_{j=1}^M \norma{\rho_j^\tau(t,\cdot) - \sigma_j^\tau(t,\cdot)}_1
    \leq
    \sum_{j=1}^M \norma{\rho_j^0 - \sigma_j^0}_1
    + M\, \mathfrak{T}\, \mathfrak{L} \, t.
\end{equation}
In particular, the above estimate is independent of $\tau$.
\end{lemma}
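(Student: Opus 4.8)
The plan is to run the Kruzhkov doubling-of-variables argument lane by lane, exploiting the fact that the two systems are coupled only through the \emph{identical} source terms, and then to extract the two residual contributions separately: the flux mismatch $F_j-\tilde F_j$ and the relaxation source. For each fixed $j$ I would insert $\kappa=\sigma_j^\tau(s,y)$ into the entropy inequality \eqref{eq:multiLWREC} satisfied by $\rho_j^\tau$ and, symmetrically, $\kappa=\rho_j^\tau(t,x)$ into the inequality satisfied by $\sigma_j^\tau$, add them, and pass to the limit along a test function concentrating on the diagonal $x=y$, $s=t$. This yields, for a.e.\ $t$,
\[
\frac{d}{dt}\norma{\rho_j^\tau(t,\cdot)-\sigma_j^\tau(t,\cdot)}_1 \le \mathcal{A}_j(t) + \mathcal{B}_j(t),
\]
where $\mathcal{A}_j$ collects the flux-mismatch term and $\mathcal{B}_j$ the (still $1/\tau$-weighted) source term.

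The decisive step, and the one responsible for the $\tau$-independence, is to show that after summing over $j$ the source contribution is non-positive. Writing $d_j=\rho_j^\tau-\sigma_j^\tau$ and $a_j := S_j(\rho_j^\tau,\rho_{j+1}^\tau)-S_j(\sigma_j^\tau,\sigma_{j+1}^\tau)$ (with $a_0=a_M=0$ by \textbf{(S0)}), the source term for lane $j$ is $\sgn(d_j)(a_{j-1}-a_j)/\tau$. An Abel summation recasts $\sum_{j}\sgn(d_j)(a_{j-1}-a_j)$ as $\sum_{j=1}^{M-1}\bigl(\sgn(d_{j+1})-\sgn(d_j)\bigr)\,a_j$, and I would verify term by term (with the convention $\sgn(0)=0$) that each summand is $\le 0$: by the mean value theorem $a_j=\del_1S_j\,d_j+\del_2S_j\,d_{j+1}$ with $\del_1S_j\ge0$ and $\del_2S_j\le0$ from \textbf{(S0)}, so whenever $\sgn(d_{j+1})>\sgn(d_j)$ one necessarily has $a_j\le0$, and whenever $\sgn(d_{j+1})<\sgn(d_j)$ one has $a_j\ge0$. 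Hence $\sum_j\mathcal{B}_j(t)\le0$ and the singular $1/\tau$ factor disappears from the estimate.

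It remains to bound the flux mismatch. Regarding $\bsigma^\tau$ as an approximate solution of the $F_j$-equation, the standard manipulations control $\mathcal{A}_j$ by
\[
\mathcal{A}_j(t)\le \int_\R \modulo{\del_x\bigl(F_j(\sigma_j^\tau)-\tilde F_j(\sigma_j^\tau)\bigr)}\,dx \le \norma{F_j'-\tilde F_j'}_\infty\,\tv\left(\sigma_j^\tau(t,\cdot)\right).
\]
In the AV-free setting the total variation of each lane is non-increasing, so by (the argument of) Lemma~\ref{lem:TV} one has $\tv(\sigma_j^\tau(t,\cdot))\le\tv(\sigma_j^0)\le\tv(\bsigma^0)$ uniformly in $t$ and $\tau$; swapping the roles of $\brho^\tau$ and $\bsigma^\tau$ lets me replace $\tv(\bsigma^0)$ by the smaller of the two initial variations, which is exactly why $\mathfrak{T}=\min\{\tv(\brho^0),\tv(\bsigma^0)\}$ appears. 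Thus $\sum_{j=1}^M\mathcal{A}_j(t)\le M\,\mathfrak{T}\,\mathfrak{L}$, and integrating $\tfrac{d}{dt}\sum_j\norma{d_j(t)}_1\le\sum_j\mathcal{A}_j(t)$ in time yields the claimed bound \eqref{eq:L1contraction}.

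The main obstacle I anticipate is the rigorous execution of the doubling argument in the presence of the coupled source: the entropy inequality \eqref{eq:multiLWREC} for a single lane treats the neighbouring densities $\rho_{j\pm1}^\tau$, $\sigma_{j\pm1}^\tau$ as given inhomogeneities, so I must ensure that the Lipschitz regularity in \textbf{(S0)} lets the source terms pass to the diagonal limit without generating uncontrolled remainders, and that the sign cancellation of the previous paragraph survives this limit rather than holding only formally. The flux-mismatch estimate is comparatively standard (a flux-Lipschitz comparison in the spirit of Bouchut--Perthame), but care is needed to put $\mathcal{A}_j$ in a form in which the factor $\tv(\sigma_j^\tau)$ genuinely appears, and to invoke the time-monotonicity of the total variation to keep the constant uniform in $t$.
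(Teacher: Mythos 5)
Your proposal is correct in substance, but it follows a genuinely different route from the paper. The paper never performs a doubling-of-variables argument: it works at the level of the fractional-step wave-front-tracking approximations used to construct the solutions, combining (i) the classical flux-comparison estimate for the homogeneous conservation-law steps, which contributes $\mathfrak{T}\,\mathfrak{L}\,\Delta t^\nu$ per lane per step (citing \cite{Bianchini2000} together with the summed total-variation bound of \cite{HoldenRisebro2019}), and (ii) the observation that the Euler update for the source is order-preserving under the condition $\Delta t^\nu \leq \tau/2\Sim$ and conserves mass (the terms $S_j$ telescope), hence is $\L1$-contractive by the Crandall--Tartar lemma; summing the per-step estimates and letting $\nu\to\infty$ gives \eqref{eq:L1contraction}. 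Your Abel-summation sign argument is precisely the continuous-level counterpart of that Crandall--Tartar step: both exploit the fact that \textbf{(S0)} makes the inter-lane coupling quasi-monotone (cooperative), so that the $1/\tau$-weighted contribution becomes dissipative once summed over the lanes, and your case analysis of $\bigl(\sgn(d_{j+1})-\sgn(d_j)\bigr)a_j$ is sound. What the paper's route buys is that all the technical work reduces to known scalar estimates and elementary manipulations of the splitting map, applied directly to the solutions as constructed; what your route buys is that it is intrinsic --- it would apply to any entropy weak solutions in the sense of Definition~\ref{def:multiLWREC}, not only to wave-front-tracking limits --- and it even yields the slightly sharper constant $\mathfrak{T}\,\mathfrak{L}\,t$ in place of $M\,\mathfrak{T}\,\mathfrak{L}\,t$; the price is the rigorous execution of the Kato inequality with the $1/\tau$-weighted coupled source and two distinct fluxes, which you correctly identify as the delicate step. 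One caution: your claim that ``the total variation of each lane is non-increasing'' is not actually available, since the source can transfer variation between lanes (a jump in $\sigma_{j\pm1}^\tau$ creates a jump in $\sigma_j^\tau$ through the relaxation term); the correct --- and for your argument sufficient --- statement is the bound on the \emph{summed} variation, $\sum_{j}\tv\bigl(\sigma_j^\tau(t,\cdot)\bigr)\leq \tv(\bsigma^0)$, which is exactly what the paper invokes from \cite{HoldenRisebro2019}, and your summed estimate $\sum_j \mathcal{A}_j(t)\leq \mathfrak{L}\,\tv(\bsigma^0)$ needs nothing more.
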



\begin{proof}
Let us consider WFT approximate solutions constructed by fractional steps as in Section~\ref{sec:existence}.
Given a sequence of time steps $\Delta t^\nu$, $\nu\in\N$,
such that $\Delta t^\nu \leq \tau/2\Sim$, let $\brho^{\tau,\nu}$ and $\bsigma^{\tau,\nu}$ the sequences of WFT approximations to~\eqref{eq:multiLWR} and~\eqref{eq:multiLWRbis} such that, 
setting  $t^n:=n\Delta t^\nu$, $n\in\N$ for any $\nu>0$ fixed:
\begin{itemize}
    \item 
In any time interval $[t^n,  t^{n+1}[$, $n\in\N$, $\brho^{\tau,\nu}$ is the WFT approximation of 
\begin{equation} \label{eq:hom_step1}
\begin{cases}
\del_t \rho_j + \del_x F_j(\rho_j) = 0,  \\
\rho_j(t^{n},x) = \rho_{j}^{\tau,\nu}(t^{n}+,x),
\end{cases}
\qquad 
\begin{array}{l}
x\in\R, ~t\in[t^{n}, t^{n+1}[, \\
j=1,\ldots,M, 
\end{array}
\end{equation}
 and $\bsigma^{\tau,\nu}$ is the WFT approximation of 
\begin{equation} \label{eq:hom_step2}
\begin{cases}
\del_t \sigma_j + \del_x \tilde F_j(\sigma_j) = 0,  \\
\sigma_j(t^{n},x) = \sigma_{j}^{\tau,\nu}(t^{n}+,x),
\end{cases}
\qquad 
\begin{array}{l}
x\in\R, ~t\in[t^{n}, t^{n+1}[, \\
j=1,\ldots,M, 
\end{array}
\end{equation}
constructed as described e.g. in~\cite[Section 2.3]{HoldenRisebroBook2015}.

\item At time $t=t^{n+1}$, we define
\begin{align} \label{eq:update1}
\rho_{j}^{\tau,\nu}(t^{n+1}+,\cdot)&=\rho_{j}^{\tau,\nu}(t^{n+1}-,\cdot) 
+ \frac{\Delta t^\nu}{\tau}\left(S_{j-1} (\rho_{j-1}^{\tau,\nu},\rho_{j}^{\tau,\nu}) - S_{j} (\rho_{j}^{\tau,\nu},\rho_{j+1}^{\tau,\nu})\right)(t^{n+1}-,\cdot), \\
\label{eq:update2}
\sigma_{j}^{\tau,\nu}(t^{n+1}+,\cdot)&=\sigma_{j}^{\tau,\nu}(t^{n+1}-,\cdot) 
+ \frac{\Delta t^\nu}{\tau}\left(S_{j-1} (\sigma_{j-1}^{\tau,\nu},\sigma_{j}^{\tau,\nu}) - S_{j} (\sigma_{j}^{\tau,\nu},\sigma_{j+1}^{\tau,\nu})\right)(t^{n+1}-,\cdot).
\end{align}
\end{itemize}
Solutions of~\eqref{eq:hom_step1} and~\eqref{eq:hom_step2} satisfy
\begin{equation}\label{eq:FL1}
    \norma{\rho_{j}^{\tau,\nu}(t^{n+1}-,\cdot) - \sigma_{j}^{\tau,\nu}(t^{n+1}-,\cdot)}_1
    \leq 
    \norma{\rho_{j}^{\tau,\nu}(t^{n}+,\cdot) - \sigma_{j}^{\tau,\nu}(t^{n}+,\cdot)}_1
    +  \mathfrak{T}\, \mathfrak{L} \,  \Delta t^\nu,
\end{equation}
see e.g.~\cite[Section 3]{Bianchini2000}. 
Indeed, by~\cite[Corollary 3.4]{HoldenRisebro2019}, we know that for all $t\geq 0$
\[
\tv(\brho^{\tau,\nu} (t,\cdot)):=\sum_{j=1}^M \tv(\rho_j^{\tau,\nu} (t,\cdot))\leq
\sum_{j=1}^M \tv(\rho_j^{0,\nu})\leq
\sum_{j=1}^M \tv(\rho_j^0) =: \tv(\brho^0).
\]
Subtracting~\eqref{eq:update2} from~\eqref{eq:update1} and setting $\rho_{j,\nu}^\tau(t^{n+1}-,\cdot):=\rho_j$
and $\sigma_{j,\nu}^\tau(t^{n+1}-,\cdot):=\sigma_j$ for $j=1,\ldots,M$, we get (using notations introduced earlier for the partial derivatives of the terms $S_j$)
\begin{align*}
    &\rho_{j}^{\tau,\nu}(t^{n+1}+,\cdot) - \sigma_{j}^{\tau,\nu}(t^{n+1}+,\cdot) \\
    = &~  \rho_{j}  - \sigma_{j}
    +  \frac{\Delta t^\nu}{\tau}
    \left( 
    S_{j-1} (\rho_{j-1},\rho_{j}) - S_{j} (\rho_{j},\rho_{j+1})
    - S_{j-1} (\sigma_{j-1},\sigma_{j}) + S_{j} (\sigma_{j},\sigma_{j+1})
    \right) \\
     = &~  \rho_{j}  - \sigma_{j}
    +  \frac{\Delta t^\nu}{\tau}
    \left( \del_1 S_{j-1} (\rho_{j-1}-\sigma_{j-1}) + \del_2 S_{j-1} (\rho_{j}-\sigma_{j})
    - \del_1 S_{j} (\rho_{j}-\sigma_{j})
    - \del_2 S_{j} (\rho_{j+1}-\sigma_{j+1})
    \right) \\
      = &~ \left( \rho_{j}  - \sigma_{j} \right)
    \left[ 1 - \frac{\Delta t^\nu}{\tau} (\del_1 S_{j} - \del_2 S_{j-1}) \right]
    +  \frac{\Delta t^\nu}{\tau}
    \left( \del_1 S_{j-1} (\rho_{j-1}-\sigma_{j-1}) 
    - \del_2 S_{j} (\rho_{j+1}-\sigma_{j+1})
    \right).
\end{align*}
Since $\del_1 S_{j-1} \geq 0$ and $\del_2 S_{j}\leq 0$ by hypothesis and $ 1 - \frac{\Delta t^\nu}{\tau} (\del_1 S_{j} - \del_2 S_{j-1})\geq 0$ for $\Delta t^\nu$ sufficiently small, we conclude that if 
$\rho^{\tau,\nu}_{j}(t^{n+1}-,\cdot)\leq \sigma^{\tau,\nu}_{j}(t^{n+1}-,\cdot)$ for all $j=1,\ldots,M$, then 
$\rho_{j}^{\tau,\nu}(t^{n+1}+,\cdot) \leq \sigma_{j}^{\tau,\nu}(t^{n+1}+,\cdot)$ for all $j=1,\ldots,M$.
Then, by the Crandall-Tartar lemma~\cite[Lemma 2.13]{HoldenRisebroBook2015}, we get
\begin{equation}\label{eq:FL2}
\sum_{j=1}^M\norma{\rho_{j}^{\tau,\nu}(t^{n+1}+,\cdot) - \sigma_{j}^{\tau,\nu}(t^{n+1}+,\cdot)}_1
    \leq 
   \sum_{j=1}^M \norma{\rho_{j}^{\tau,\nu}(t^{n+1}-,\cdot) - \sigma_{j}^{\tau,\nu}(t^{n+1}-,\cdot)}_1.
\end{equation}
Putting together~\eqref{eq:FL1} and~\eqref{eq:FL2}, we obtain the desired estimate. 
\end{proof}

\medskip

Let us now assume: 
\begin{itemize}
    \item[{\bf (V)}] $v_j=v$ (in particular, the maximal speeds become $V_j=V$) and $R_j=R/M$ the maximal density on each lane (so that $v(R/M)=0$), 
and thus $F_j(\rho)=F(\rho)=\rho v(\rho)$, for all $j=1,\ldots,M$.
\end{itemize}

\begin{lemma} \label{lem:equilibrio}
Under hypotheses {\bf (S0)}, {\bf (S1)} and {\bf (V)}, let the initial data $\rho_j^0\in \left(\L1\cap\BV\right) (\R;[0,R/M])$ for $j=1,\ldots,M$, be at equilibrium, i.e.
\begin{equation} \label{eq:ICequilibrio}
S_{j-1}\left(\rho_{j-1}^0(x),\rho_j^0(x)\right) = S_{j}\left(\rho_{j}^0(x),\rho_{j+1}^0(x)\right) = 0,
\qquad x\in\R,\quad j=1,\ldots,M.
\end{equation}
Then the solution stays at equilibrium, i.e.
\begin{equation*} 
S_{j-1}\left(\rho^\tau_{j-1}(t,x),\rho^\tau_j(t,x)\right) = S_{j}\left(\rho^\tau_{j}(t,x),\rho^\tau_{j+1}(t,x)\right) = 0,
\qquad j=1,\ldots,M,
\end{equation*}
for all $x\in\R$, $t>0$. In particular, the solution is $\L1$-Lipschitz continuous in time, uniformly in $\tau$:
\begin{equation}\label{eq:uniformLip}
    \norma{\rho^\tau_j(t_1,\cdot)-\rho^\tau_j(t_2,\cdot)}_1 \leq K \modulo{t_1-t_2}, \qquad j=1,\ldots,M,
\end{equation}
for any $t_1,t_2\in\R^+$, with $K=L\T$.

\end{lemma}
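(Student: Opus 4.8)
The crux of the argument, and what I expect to be the main obstacle, is a preliminary structural fact: under {\bf (S0)} and {\bf (S1)} each source vanishes on the \emph{whole} diagonal, i.e. $S_j(u,u)=0$ for every $u\in[0,R/M]$ and every $j$. Hypothesis {\bf (S1)} only asserts the reverse inclusion (zeros lie on the diagonal), so this must be extracted from monotonicity. I would argue as follows: fix $u$ and suppose $S_j(u,u)>0$. Since $w\mapsto S_j(u,w)$ is non-increasing (by $\del_2 S_j\le 0$) while $S_j(u,R/M)\le S_j(R/M,R/M)=0$ (by $\del_1 S_j\ge 0$ together with {\bf (S0)}), the intermediate value theorem yields some $w^\ast>u$ with $S_j(u,w^\ast)=0$, contradicting {\bf (S1)}, which would force $u=w^\ast$. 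The case $S_j(u,u)<0$ is excluded symmetrically by working with the non-decreasing map $a\mapsto S_j(a,u)$ between $S_j(u,u)<0$ and $S_j(R/M,u)\ge 0$. Hence each $S_j$ vanishes exactly on the diagonal.

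Granting this, the proof reduces to scalar conservation law theory. First, the equilibrium condition~\eqref{eq:ICequilibrio} together with {\bf (S1)} forces $\rho_1^0=\cdots=\rho_M^0=:\rho^0$ pointwise, so the datum collapses to a single profile. Let $\rho$ be the unique entropy solution of the scalar law $\del_t\rho+\del_x F(\rho)=0$ with datum $\rho^0$ (hypothesis {\bf (V)} makes the flux $F$ common to all lanes). I would then show that the constant-in-lane tuple $(\rho,\ldots,\rho)$ is an entropy weak solution of~\eqref{eq:multiLWR} in the sense of Definition~\ref{def:multiLWREC}: substituting $\rho_j^\tau=\rho$ into~\eqref{eq:multiLWREC}, the source integrand on each lane becomes $\sgn(\rho-\kappa)\big(S_{j-1}(\rho,\rho)-S_j(\rho,\rho)\big)=0$ by the diagonal fact, and what remains is precisely the Kru\v{z}hkov entropy inequality for the scalar law, satisfied by $\rho$.

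Since entropy weak solutions of~\eqref{eq:multiLWR} are unique (this follows, for instance, from the $\L1$-stability estimate of Lemma~\ref{lem:multiLWRstability} applied with $\tilde F=F$, so that $\mathfrak{L}=0$ and the right-hand side contracts), the solution $\brho^\tau$ must coincide with $(\rho,\ldots,\rho)$. Consequently $\rho_j^\tau(t,x)=\rho(t,x)$ for all $j$, so that $\rho_{j-1}^\tau=\rho_j^\tau=\rho_{j+1}^\tau$ and the diagonal fact again gives $S_{j-1}(\rho_{j-1}^\tau,\rho_j^\tau)=S_j(\rho_j^\tau,\rho_{j+1}^\tau)=0$ for all $x\in\R$ and $t>0$: the solution stays at equilibrium. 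In particular $\brho^\tau$ does not depend on $\tau$. (Equivalently, in the fractional-step scheme of Section~\ref{sec:existence} one checks by induction that the relaxation step~\eqref{eq:update} never activates on equilibrium data, so the approximations stay on the diagonal and pass to this same limit.)

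Finally, the uniform time-Lipschitz bound is inherited from the scalar law. Because every lane coincides with $\rho$, which solves $\del_t\rho+\del_x F(\rho)=0$, the standard finite-propagation-speed estimate yields $\norma{\rho_j^\tau(t_1,\cdot)-\rho_j^\tau(t_2,\cdot)}_1\le\norma{F'}_\infty\,\tv(\rho^0)\,\modulo{t_1-t_2}\le L\,\T\,\modulo{t_1-t_2}$, with $K=L\T$ manifestly independent of $\tau$; in the scheme language, the absence of any relaxation step is exactly what makes the blow-up term $C_\tau$ of Lemma~\ref{lem:timeLip} drop out. Thus the whole statement hinges on the diagonal observation $S_j(u,u)=0$: once the equilibrium set is identified with $\{\rho_1=\cdots=\rho_M\}$ and shown to annihilate the source, everything else is a consequence of scalar conservation law theory and uniqueness.
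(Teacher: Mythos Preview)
Your proof is correct and follows essentially the same route as the paper, which dispatches the lemma in one sentence by noting that equilibrium data collapse to a single profile under {\bf (S1)} and that {\bf (V)} then makes all lanes evolve identically under the common scalar flux $F$. Your explicit verification that $S_j(u,u)=0$ for all $u$ (a point the paper simply takes for granted) and your appeal to Lemma~\ref{lem:multiLWRstability} for uniqueness fill in details the paper omits but do not depart from its approach.
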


The proof is trivial observing that system~\eqref{eq:multiLWR} reduces to $M$ identical equations with the same initial datum, and therefore $\rho^\tau_1(t,\cdot)=\ldots=\rho^\tau_M(t,\cdot):=\bar\rho^\tau$ for all $t>0$.

\medskip


The uniform Lipschitz estimate~\eqref{eq:uniformLip} and the uniform $\tv$ bound~\eqref{eq:TVbound} allow to apply Helly's Theorem to the sequence $\left\{\brho^\tau \right\}_{\tau>0}$, proving the existence of a subsequence converging in $\Lloc1$ (indeed, the sequence is constant for all $\tau>0$, so there is nothing to prove). In particular, the limit $\bar\brho= \left(\bar\rho_1,\ldots,\bar\rho_M\right)$ satisfies $\bar\rho_j=\bar\rho^\tau$ for any $\tau>0$ and it holds
\[
\del_t\bar\rho_j +\del_x F(\bar\rho_j) = 0, \qquad 
j=1,\ldots,M,
\]
so that, setting $r:=\sum_{j=1}^M\bar\rho_j=M\bar\rho^\tau$, $r$ is a entropy weak solution to the scalar Cauchy problem
\begin{equation}\label{eq:limitLWR}
\begin{cases}
\del_t r +\del_x f(r) =0, & t>0, ~x\in\R, \\
r(0,x) = \sum_{j=1}^M\rho_j^0 (x), & x\in\R,
\end{cases}   
\end{equation}
where we have set $f(r):= M F(r/M)$.

\medskip
We can now give the following result about the convergence of~\eqref{eq:multiLWR} to~\eqref{eq:limitLWR} as $\tau\searrow 0$.

\begin{theorem} \label{thm:relax}
 Under hypotheses {\bf (S0)}, {\bf (S1)} and {\bf (V)}, let $\rho_j^0\in \left(\L1\cap\BV\right) (\R;[0,R/M])$ and $\brho^\tau=\left(\rho^\tau_1,\ldots,\rho^\tau_M\right)$ be the corresponding solution of~\eqref{eq:multiLWR}.
 Then, for each $t>0$,
 \[
 \rho^\tau_j (t,\cdot) \longrightarrow \frac{r(t,\cdot)}{M} \qquad\hbox{in } \Lloc1(\R;\R)
 \]
 as $\tau\searrow 0$, where $r \in \C{0} \left(\R^+; \L1 \left(\R; [0,R]\right)\right)$ is the entropy weak solution of~\eqref{eq:limitLWR}.
\end{theorem}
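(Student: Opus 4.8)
The plan is to pass to the limit not on the individual lane densities — whose time-Lipschitz constant blows up as $\tau\searrow0$ (Lemma~\ref{lem:timeLip}) — but on the total density $r^\tau:=\sum_{j=1}^M\rho_j^\tau$. Summing the $M$ balance laws in~\eqref{eq:multiLWR} and using $S_0=S_M=0$, the source terms telescope, so that $r^\tau$ is a weak solution of the conservation law $\del_t r^\tau+\del_x G^\tau=0$ with $G^\tau:=\sum_j F(\rho_j^\tau)$. At the discrete WFT–splitting level this is transparent: the relaxation update~\eqref{eq:update1} leaves $\sum_j\rho_j^{\tau,\nu}$ unchanged, so $r^{\tau,\nu}$ evolves only through the conservative homogeneous steps. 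Consequently $r^\tau$ inherits a space-BV bound $\tv(r^\tau(t))\leq\sum_j\tv(\rho_j^\tau(t))\leq M\T$ from Lemma~\ref{lem:TV} and, crucially, an $\L1$-Lipschitz-in-time bound with constant $\leq M\norma{F'}_\infty\T$, \emph{both uniform in} $\tau$. Helly's theorem then yields a subsequence $r^\tau\to r$ in $\C0([0,T];\Lloc1)$ and a.e.; the uniform space-BV of each $\rho_j^\tau$ also gives $\rho_j^\tau\to\rho_j^*$ in $\Lloc1$ and a.e. along a further subsequence.

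The core of the argument, adapting the relaxation technique of~\cite{BressanShen2000}, is to show the limit lies on the equilibrium manifold, i.e. $\rho_j^*=r/M$ for all $j$. I would sum the Kruzhkov inequalities~\eqref{eq:multiLWREC} over $j$ with a common constant $\kappa$. Reindexing and using $S_0=S_M=0$, the combined relaxation term is $\frac1\tau\sum_{j=1}^{M-1}S_j(\rho_j^\tau,\rho_{j+1}^\tau)\big[\sgn(\rho_{j+1}^\tau-\kappa)-\sgn(\rho_j^\tau-\kappa)\big]$, which is pointwise \emph{nonpositive}: by the monotonicity in {\bf (S0)} together with {\bf (S1)}, $S_j$ carries the sign of $\rho_j^\tau-\rho_{j+1}^\tau$, while the bracket is nonzero — and of the opposite sign — exactly when $\kappa$ separates $\rho_j^\tau$ and $\rho_{j+1}^\tau$. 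Since every other term in the summed inequality is bounded uniformly in $\tau$ for $\kappa$ in the compact range $[0,R/M]$ (outside which the brackets vanish), the dissipation $\mathcal D_\kappa^\tau:=\sum_{j}S_j(\rho_j^\tau,\rho_{j+1}^\tau)\big[\sgn(\rho_j^\tau-\kappa)-\sgn(\rho_{j+1}^\tau-\kappa)\big]\geq0$ satisfies $\frac1\tau\int_0^T\!\int_\R\mathcal D_\kappa^\tau\,\varphi\,dx\,dt\leq C$ with $C$ independent of $\kappa$, hence is $O(\tau)$. Integrating in $\kappa$ over $[0,R/M]$ and invoking $\int_\R[\sgn(a-\kappa)-\sgn(b-\kappa)]\,d\kappa=2(a-b)$ converts this into $\int_0^T\!\int_\R\sum_j S_j(\rho_j^\tau,\rho_{j+1}^\tau)(\rho_j^\tau-\rho_{j+1}^\tau)\,\varphi\,dx\,dt\to0$. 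Each summand is nonnegative and, by {\bf (S1)}, vanishes precisely when $\rho_j^\tau=\rho_{j+1}^\tau$; passing to the a.e. limit forces $\rho_j^*=\rho_{j+1}^*$ for every $j$, whence $\rho_1^*=\cdots=\rho_M^*=r/M$ since they sum to $r$.

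With equilibration in hand I would identify $r$ as the entropy solution of~\eqref{eq:limitLWR}. Continuity of $F$ and equilibration give $G^\tau\to\sum_jF(r/M)=MF(r/M)=f(r)$ in $\Lloc1$, so $r$ solves $\del_t r+\del_x f(r)=0$ weakly. For the entropy condition, restrict to test functions supported in $t>0$: the nonpositive relaxation term may be dropped from the summed inequality without disturbing its sign, and passing to the limit with $\kappa=k/M$ gives, via $\sum_j|\rho_j^\tau-k/M|\to|r-k|$ and $\sum_j\sgn(\rho_j^\tau-k/M)(F(\rho_j^\tau)-F(k/M))\to\sgn(r-k)(f(r)-f(k))$, the interior Kruzhkov inequality for $r$ with flux $f$. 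Finally, since the relaxation step preserves total mass, $r^\tau$ carries \emph{no initial layer}: its uniform time-Lipschitz continuity forces $r(0,\cdot)=\sum_j\rho_j^0$ in $\L1$, exactly the datum in~\eqref{eq:limitLWR}. The interior entropy inequality together with the correct initial trace pins down $r$ as the unique Kruzhkov solution, so the whole family converges and $\rho_j^\tau\to r/M$.

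The main obstacle is the equilibration estimate of the second paragraph. Extracting a quantitative $O(\tau)$ control of the distance to the equilibrium manifold purely from the one-sided sign of the relaxation term — and then upgrading it, via the co-area identity and {\bf (S1)}, to a.e. convergence of the differences $\rho_j^\tau-\rho_{j+1}^\tau$ — is the delicate step. It is precisely here that the non-uniformity of Lemma~\ref{lem:timeLip} is circumvented, by transferring all uniform-in-$\tau$ compactness onto the conserved total density $r^\tau$.
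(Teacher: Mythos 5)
Your overall architecture --- carry all the uniform-in-$\tau$ compactness on the conserved total density $r^\tau=\sum_j\rho_j^\tau$, force equilibration through an $O(\tau)$ entropy-dissipation bound, and then identify the limit --- is genuinely different from the paper's proof, which follows Bressan--Shen: there $\brho^\tau$ is compared on the initial layer $[0,\sqrt\tau]$ with the pure relaxation ODE (exponential decay to equilibrium, from negative definiteness of the linearized source) and afterwards with the equilibrium solution $r/M$ via the $\tau$-uniform $\L1$ stability of Lemma~\ref{lem:multiLWRstability}; that route even yields a rate $O(\sqrt\tau+e^{-1/\sqrt\tau})$, which yours does not. However, your write-up contains one genuine gap: the assertion that ``the uniform space-BV of each $\rho_j^\tau$ also gives $\rho_j^\tau\to\rho_j^*$ in $\Lloc1$ and a.e.\ along a further subsequence.'' A uniform bound on $\tv\left(\rho_j^\tau(t,\cdot)\right)$ plus the $\L\infty$ bound gives compactness in $x$ at each fixed $t$, but strong compactness in space--time additionally requires some equicontinuity in time, and that is exactly what degenerates here: $\del_t\rho_j^\tau$ carries the $O(1/\tau)$ source, and the only available time estimate, Lemma~\ref{lem:timeLip}, blows up as $\tau\searrow0$ --- the paper remarks explicitly that this forbids passing to the limit lane by lane. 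A family such as $\rho^\tau(t,x)=a+(b-a)\chi_{\{\sin(t/\tau)>0\}}$ has zero spatial variation and admits no strongly convergent subsequence; it is precisely your dissipation estimate that must exclude such oscillation, so you cannot presuppose the a.e.\ limits $\rho_j^*$ when exploiting it. Since your equilibration step is phrased as ``passing to the a.e.\ limit forces $\rho_j^*=\rho_{j+1}^*$,'' it rests on objects you have not constructed.

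The gap is repairable inside your own scheme by reversing the logic. From $\int_{\R^+}\int_\R\sum_jS_j(\rho_j^\tau,\rho_{j+1}^\tau)(\rho_j^\tau-\rho_{j+1}^\tau)\varphi\,dx\,dt=O(\tau)$, each summand being nonnegative, use {\bf (S1)} together with continuity of $S_j$ and compactness of $[0,R/M]^2$: for every $\delta>0$ there is $m_\delta>0$ with $S_j(u,w)(u-w)\geq m_\delta$ whenever $|u-w|\geq\delta$ (here you also need $S_j(u,u)=0$, which does follow from {\bf (S0)}--{\bf (S1)} by an intermediate-value argument, and is likewise used by the paper in the proof of Theorem~\ref{thm:relaxM2}). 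Hence $\rho_j^\tau-\rho_{j+1}^\tau\to0$ in measure on compact sets and, by the uniform $\L\infty$ bound, in $\Lloc1$. Writing $\rho_j^\tau-r^\tau/M$ as a finite linear combination of the consecutive differences $\rho_k^\tau-\rho_{k+1}^\tau$, the strong convergence $\rho_j^\tau\to r/M$ then follows from the compactness of $r^\tau$ alone, with no subsequence extraction on individual lanes; the rest of your identification argument (interior entropy inequality with $\kappa=k/M$ after dropping the nonpositive relaxation term, plus the no-initial-layer argument for $r^\tau$ and uniqueness of the Kruzhkov solution) goes through as written.
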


\begin{proof}
We adapt an argument by Bressan and Shen~\cite[Section 5]{BressanShen2000}.
Denote by $\tilde\brho^\tau=\left(\tilde\rho^\tau_1,\ldots,\tilde\rho^\tau_M\right)$ the solution of 
\begin{equation} \label{eq:ode}
\del_t \rho_j = \dfrac{1}{\tau}\left(S_{j-1} (\rho_{j-1},\rho_j) - S_{j} (\rho_{j},\rho_{j+1})\right), \qquad j=1,\ldots,M,
\end{equation}
with the same initial data $\rho_j^0$. Linearizing~\eqref{eq:ode} around the equilibrium initial condition $\brho(0,x)=\left(\rho_1(0,x),\ldots,\rho_M(0,x)\right)$ with $\rho_j(0,x):= r(0,x)/M$ for $j=1,\ldots,M$ we obtain
\begin{equation*} 
\del_t \brho = \dfrac{1}{\tau}\left(\mathbb{S}(\brho(0,\cdot)) + D\mathbb{S}(\brho(0,\cdot)) (\brho -\brho(0,\cdot)\right),
\end{equation*}
where we have set $\mathbb{S}_j (\brho(0,\cdot)) := S_{j-1} (r(0,\cdot)/M,r(0,\cdot)/M) - S_{j} (r(0,\cdot)/M,r(0,\cdot)/M)=0$ and 
\[
D\mathbb{S}(\brho) =
\left[
\begin{array}{cccccc}
  -\del_1 S_1   & -\del_2 S_1 & 0 & \cdots   & \cdots   & 0\\
  \del_1 S_1   & \del_2 S_1 - \del_1 S_2 & -\del_2 S_2 & 0 & \cdots     & \vdots\\
  0 & \del_1 S_2 & \del_2 S_2 - \del_1 S_3 & -\del_2 S_3 & 0 & \vdots \\
  \vdots & 0 & \del_1 S_3 & \ddots & &  \\
  0 & \cdots  & \cdots & 0 & \del_1 S_{M-1} & \del_2 S_{M-1}
\end{array}
\right]
\]
is a tridiagonal matrix such that
\[
D\mathbb{S}(\brho)_{i,j} = 
\begin{cases}
\del_1 S_{j} & i= j+1, \\
\del_2  S_{j-1} - \del_1  S_{j} & i=j,\\
 - \del_2  S_{j-1} & i=j-1.
\end{cases}
\]
The similarity transformation to a symmetric matrix $\mathbb{A}(\brho)$
gives
\[
\mathbb{A}(\brho)_{i,j} = 
\begin{cases}
 \sqrt{-\del_1 S_{j} \del_2  S_{j}} & i=j+1, \\
\del_2  S_{j-1} - \del_1  S_{j} & i=j,\\
 \sqrt{-\del_1 S_{j-1} \del_2  S_{j-1}} & i=j-1,
\end{cases}
\]
which, by the monotonicity hypotheses in ${\bf (S)}$, turns out to be negative definite (this can be easily seen applying Sylvester's criterion).
Therefore,
\begin{equation} \label{eq:linear_estimate}
    \sum_{j=1}^M \modulo{\tilde\rho^\tau_j(t,x)-\frac{r(0,x)}{M}} = \mathcal{O}(1) \cdot e^{-t/\tau}, \qquad x\in\R\,.
\end{equation}
Moreover, for any interval $[a,b]\subset \R$, comparison between~\eqref{eq:multiLWR} and~\eqref{eq:ode} gives, by Lemma~\ref{lem:multiLWRstability},
\begin{equation} \label{eq:flux_stability}
    \sum_{j=1}^M \norma{\tilde\rho^\tau_j(t,\cdot)-\rho^\tau_j(t,\cdot)}_{\L1([a,b])} = \mathcal{O}(1) \cdot (b-a)\, t\,.
\end{equation}
Taking $t=\sqrt{\tau}$ in~\eqref{eq:linear_estimate} and ~\eqref{eq:flux_stability} we get
\begin{equation*} 
    \sum_{j=1}^M \norma{\rho^\tau_j(\sqrt{\tau},\cdot)-\frac{r(0,\cdot)}{M}}_{\L1([a,b])} = \mathcal{O}(1) \cdot (b-a) \left(\sqrt{\tau} + e^{-1/\sqrt{\tau}}\right).
\end{equation*}
Taking $\brho^\tau(\sqrt{\tau},\cdot)$ as initial datum in~\eqref{eq:multiLWR}, the stability estimate~\eqref{eq:L1contraction} gives
\begin{equation*} 
    \sum_{j=1}^M \norma{\rho_j^\tau(t,\cdot) - \frac{r(t-\sqrt{\tau},\cdot)}{M}}_{\L1([a,b])}
    = \mathcal{O}(1) \cdot (b-a) \left(\sqrt{\tau} + e^{-1/\sqrt{\tau}}\right)
\end{equation*}
for all $t\geq \sqrt{\tau}$. By the $\L1$-Lipschitz continuity in time of $r$ guaranteed by Lemma~\ref{lem:equilibrio}, we finally get
\begin{equation*} 
    \sum_{j=1}^M \norma{\rho_j^\tau(t,\cdot) - \frac{r(t,\cdot)}{M}}_{\L1([a,b])}
    = \mathcal{O}(1) \cdot (b-a) \left(\sqrt{\tau} + e^{-1/\sqrt{\tau}}\right)
\end{equation*}
for all $t>0$.
\end{proof}

\section{Asymptotic behaviour for $M=2$ with AV}
\label{sec:relaxAV}

We now focus on the case of a two-lane road with one AV, say on lane $i=1$ (without loss of generality by symmetry), which reads:
\begin{equation} \label{eq:2LWR+AV}
\begin{cases}
\del_t \rho_1 + \del_x F_1(\rho_1) = - \dfrac{1}{\tau} S (\rho_{1},\rho_{2}),  \\[5pt]
\del_t \rho_2 + \del_x F_2(\rho_2) =  \dfrac{1}{\tau} S (\rho_{1},\rho_{2}),  \\[5pt]
\rho_j(0,x) = \rho^0_j(x),\qquad j=1,2,\\[5pt]
\dot y (t) = \min\left\{u (t), v_1(\rho_1(t,y(t)+))\right\}, \\
y (0) = y_0,   \\
\rho_1 (t,y(t)) \left(v_1(\rho_i  (t,y (t))) - \dot y (t) \right) \leq  0,
\end{cases}
\qquad 
x\in\R, ~t\geq 0,
\end{equation}
where we have set $S(\rho_1,\rho_2):=S_1(\rho_1,\rho_2)$. 

In this case, Lemma~\ref{lem:timeLip} becomes:

\begin{lemma} \label{lem:timeLipM2}
Let {\bf (S0)} and {\bf (S2)} hold and $\Delta t^\nu \leq \tau/2\Sim$, then  for any $t_1 \leq t_2\in [0,T]$, $T>0$, the approximate solutions $\brho^{\tau,\nu}$ satisfy 
\begin{equation} \label{eq:Lip_est}
\sum_{j=1}^2\norma{\rho_{j}^{\tau,\nu}(t_1,\cdot) - \rho_{j}^{\tau,\nu}(t_2,\cdot)}_1 \leq \left(C+C_\tau(t_1)\right)\modulo{t_1-t_2} ,   \qquad\hbox{for all }\nu\in\N,
\end{equation}
where $C=2L\T(1+\Sim/c)$ is independent of $\tau$  and  
\[
C_\tau(t_1)=\frac{4\Sim}{\tau} \exp
     \left(-\frac{2c}{\tau}t_1 \right) \sum_{j=1}^2 \norma{\rho_j^0}_1 .
\]
\end{lemma}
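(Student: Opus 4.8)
The plan is to repeat the fractional-step argument of Lemma~\ref{lem:timeLip}, the single new ingredient being that for $M=2$ and under {\bf (S2)} the relaxation step~\eqref{eq:update} \emph{contracts} the source term instead of merely conserving its total mass. Since $S_0=S_2=0$, for two lanes there is only one coupling function $S=S_1$, and the two source quantities of that proof coincide: $g_{1,i}^\pm=g_{2,i}^\pm=\|S(\rho_1^{\tau,\nu},\rho_2^{\tau,\nu})(t^i\pm,\cdot)\|_1=:s_i^\pm$. The whole estimate thus reduces to controlling $\frac{\Delta t^\nu}{\tau}\sum_i s_i^-$.

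The decisive step is the contraction across a relaxation update. Pointwise in $x$, \eqref{eq:update} reads $\rho_1^+=\rho_1^- -\frac{\Delta t^\nu}{\tau}S(\rho_1^-,\rho_2^-)$ and $\rho_2^+=\rho_2^- +\frac{\Delta t^\nu}{\tau}S(\rho_1^-,\rho_2^-)$, so a first-order expansion gives
\[
S(\rho_1^+,\rho_2^+)=S(\rho_1^-,\rho_2^-)\left[1-\frac{\Delta t^\nu}{\tau}\left(\del_1 S-\del_2 S\right)\right],
\]
with $\del_1 S,\del_2 S$ taken at intermediate states. By {\bf (S2)} one has $\del_1 S-\del_2 S\ge 2c$, while {\bf (S0)} and $\Delta t^\nu\le\tau/2\Sim$ keep the bracket in $[0,1]$; hence
\[
s_i^+\le\Big(1-\tfrac{2c\,\Delta t^\nu}{\tau}\Big)\,s_i^- =:\alpha\,s_i^-.
\]
This replaces the identity $\sum_j g_{j,i}^+=\sum_j g_{j,i}^-$ of the general case by a genuine decay. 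For the homogeneous step, finite propagation speed and the bound $\tv(\rho_j^{\tau,\nu})\le\T$ of Lemma~\ref{lem:TV} give $\|\rho_j(t^i-,\cdot)-\rho_j(t^{i-1}+,\cdot)\|_1\le L\T\,\Delta t^\nu$, so the Lipschitzianity of $S$ yields the growth bound $s_i^- - s_{i-1}^+\le 2\Sim L\T\,\Delta t^\nu=:\beta$.

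Combining the two gives the scalar recursion $s_i^-\le\alpha\,s_{i-1}^- +\beta$, whence
\[
s_i^-\le\alpha^i s_0^- +\frac{\beta}{1-\alpha}\le e^{-2c t^i/\tau}\,s_0^- +\frac{\Sim L\T\,\tau}{c},
\]
using $\alpha^i\le e^{-2c\,i\Delta t^\nu/\tau}=e^{-2c t^i/\tau}$, $1-\alpha=\frac{2c\Delta t^\nu}{\tau}$, and $s_0^-=\|S(\rho_1^0,\rho_2^0)\|_1\le\Sim\sum_j\|\rho_j^0\|_1$ by {\bf (S0)}. Inserting this into the decomposition of $\sum_j\|\rho_j^{\tau,\nu}(t_2,\cdot)-\rho_j^{\tau,\nu}(t_1,\cdot)\|_1$ and summing over the $N+1$ steps contained in $[t_1,t_2]$, the equilibrium term $\frac{\Sim L\T\tau}{c}$ contributes a quantity proportional to $(t_2-t_1)$ which, added to the $2L\T(t_2-t_1)$ from the homogeneous steps, gives the $\tau$-independent constant $C=2L\T(1+\Sim/c)$; the transient term, bounded for $t^i\ge t_1$ by $e^{-2ct_1/\tau}s_0^-$, produces after multiplication by $\frac{\Delta t^\nu}{\tau}$ and summation over the $\mathcal O((t_2-t_1)/\Delta t^\nu)$ steps the exponentially small coefficient $C_\tau(t_1)\,(t_2-t_1)$ of~\eqref{eq:Lip_est}.

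The crux of the argument is the contraction estimate: it is the one place where {\bf (S2)} is used, and it converts the borderline conservation of source mass in Lemma~\ref{lem:timeLip} into exponential relaxation at rate $2c/\tau$. After that only careful geometric bookkeeping remains — splitting the telescoped source mass into its $\mathcal O(e^{-2ct_1/\tau})$ transient and its $\mathcal O(\tau)$ equilibrium part — and letting $\nu\to\infty$ (so $\Delta t^\nu\to 0$) removes the discretization remainders and yields~\eqref{eq:Lip_est}.
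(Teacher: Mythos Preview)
Your proof is correct and follows the same route as the paper: both identify the single source quantity $s_i=\|S(\rho_1^{\tau,\nu},\rho_2^{\tau,\nu})(t^i,\cdot)\|_1$, derive the contraction $s_i^+\le(1-2c\,\Delta t^\nu/\tau)\,s_i^-$ from {\bf (S2)} and the growth bound $s_i^--s_{i-1}^+\le 2\Sim L\T\,\Delta t^\nu$ from finite propagation, and then sum the resulting recursion $s_i^-\le\alpha\, s_{i-1}^-+\beta$. The only slip is your closing remark about letting $\nu\to\infty$: the estimate~\eqref{eq:Lip_est} is asserted for each fixed $\nu$, and indeed the paper's own computation ends with a harmless $(t_2-t_1+\Delta t^\nu)$ factor that is simply absorbed into the constants rather than removed by a limit.
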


\begin{proof}
We follow the same procedure and notations as in the proof of Lemma~\ref{lem:timeLip}.
Setting
\[
g_{1,i}^\pm = g_{2,i}^\pm := \norma{S (\rho_{1,\nu}^\tau,\rho_{2,\nu}^\tau)(t^{i}\pm,\cdot)}_1 ,
\]
we compute
\begin{align*}
  g_{1,i}^+ + g_{2,i}^+&=  2 \norma{S(\rho_1^+,\rho_2^+)}_1 \\
  &=  2 \norma{S(\rho_1^-,\rho_2^-) +\del_1S \cdot (\rho_1^+ - \rho_1^-) + \del_2S \cdot (\rho_{2}^+ - \rho_{2}^-) }_1 \\
  &=  2 \norma{S(\rho_1^-,\rho_2^-) - \frac{\Delta t^\nu}{\tau} \del_1S\cdot S(\rho_1^-,\rho_2^-) + \frac{\Delta t^\nu}{\tau} \del_2S\cdot S(\rho_1^-,\rho_2^-) }_1 \\
  &= 2 \left(1 -\frac{\Delta t^\nu}{\tau} \left(\del_1 S -\del_2S\right)\right)\norma{S(\rho_1^-,\rho_2^-)}_1 \\
  &= \left(1 -\frac{\Delta t^\nu}{\tau} \left(\del_1 S -\del_2S\right)\right)
  \left( g_{1,i}^- + g_{2,i}^- \right) \\
  &
  \leq \left(1 -\frac{2c}{\tau} \Delta t^\nu \right)
  \left( g_{1,i}^- + g_{2,i}^- \right).
\end{align*}
Moreover, for $j=1,2$, it holds
\begin{align*}
  g_{j,i}^- - g_{j,i-1}^+
  &= \norma{S(\rho_{1,i}^-,\rho_{2,i}^-)}_1 - \norma{S(\rho_{1,i-1}^+,\rho_{2,i-1}^+)}_1 \\
  &\leq \norma{S(\rho_{1,i}^-,\rho_{2,i}^-) - S(\rho_{1,i-1}^+,\rho_{2,i-1}^+)}_1 \\
  &= \left\|\del_1 S^i \cdot ( \rho_{1,i}^- -\rho_{1,i-1}^+) + 
  \del_2 S^i \cdot ( \rho_{2,i}^- -\rho_{2,i-1}^+)\right\|_1 \\
  &\leq 2\Sim L\T \Delta t^\nu,
\end{align*}
where we have set $\del_l S^i:=\del_l S(s_1^i,s_2^i)$, $l=1,2$, with
$s_1^i= \rho_{1,i-1}^+ + \xi (\rho_{1,i}^- - \rho_{1,i-1}^+)$ and $s_2^i= \rho_{2,i-1}^+ + \xi (\rho_{2,i}^- - \rho_{2,i-1}^+)$ for some $\xi\in[0,1]$.\\
Therefore, we get
\begin{align*}
    \sum_{j=1}^M&\norma{\rho_{j}^{\tau,\nu}(t_2,\cdot) - \rho_{j}^{\tau,\nu}(t_1,\cdot)}_1
    \leq 2 L \T \left( t_2 - t_1\right)
    +  \frac{\Delta t^\nu}{\tau}
    \sum_{i=k}^{k+N} \sum_{j=1}^2 g_{j,i}^- \\
    &\leq 2 L \T \left( t_2 - t_1\right)
    +  \frac{\Delta t^\nu}{\tau}
    \sum_{i=k}^{k+N} \left[\left(1-\frac{2c}{\tau}\Delta t^\nu \right)^{i-1}\sum_{j=1}^2 g_{j,0}^+ + 4\Sim L \T \Delta t^\nu \sum_{\ell=0}^{i-1}\left(1-\frac{2c}{\tau}\Delta t^\nu \right)^{\ell}\right]\\
    &\leq 2 L \T \left( t_2 - t_1\right)
    +  \frac{\Delta t^\nu}{\tau}
    \sum_{i=k}^{k+N} \left[\left(1-\frac{2c}{\tau}\Delta t^\nu \right)^{i-1}\sum_{j=1}^2 g_{j,0}^+ + 2\Sim L \T  \frac{\tau}{c} \right]\\
    &\leq 2 L \T \left( t_2 - t_1\right)
    +  2\Sim L \T   \frac{t_2 - t_1+\Delta t^\nu}{c} + \frac{\Delta t^\nu}{\tau}
    \sum_{i=k}^{k+N} \left[\left(1-\frac{2c}{\tau}\Delta t^\nu \right)^{i-1}\sum_{j=1}^2 g_{j,0}^+  \right]\\
    &\leq 2 L \T \left( t_2 - t_1+\Delta t^\nu\right)\left( 1+\frac{\Sim}{c} \right)
     + \frac{1}{2c }
     \left(1-\frac{2c}{\tau}\Delta t^\nu \right)^{k-1} \left[1-\left(1-\frac{2c}{\tau}\Delta t^\nu \right)^{N+1}\right] \sum_{j=1}^2 g_{j,0}^+ \\
     &\leq 2 L \T \left( t_2 - t_1+\Delta t^\nu\right)\left( 1+\frac{\Sim}{c} \right)
     + \frac{1}{c }
     \left(1-\frac{2c}{\tau}\Delta t^\nu \right)^\frac{k\Delta t^\nu}{\Delta t^\nu} (N+1)\frac{2c}{\tau}\Delta t^\nu  \sum_{j=1}^2 g_{j,0}^+ \\
     &\leq  \left( t_2 - t_1+\Delta t^\nu\right)\left[2 L \T\left( 1+\frac{\Sim}{c} \right)
     +  \frac{4\Sim}{\tau} \exp
     \left(-\frac{2c}{\tau}t_1 \right) \sum_{j=1}^2 \norma{\rho_j^0}_1 \right],
\end{align*}
where we have used the inequality 
$\left(1-\frac{2c}{\tau}\Delta t^\nu \right)\geq\frac{1}{2}$, which holds provided
that $\Delta t^\nu\leq \tau/4c$.
\end{proof}

\begin{remark} \label{rem:uniformLip}
By Lemma~\ref{lem:timeLipM2}, the Lipschitz constant $C_\tau(t_1)$ is uniformly bounded as $\tau\searrow 0$ for any strictly positive time. Indeed, taking $\frac{1}{n}\leq t_1\leq t_2$ with $n\in\N$ in~\eqref{eq:Lip_est}, we have 
\begin{equation} \label{eq:bound}
C_\tau(t_1)\leq\frac{4\Sim}{\tau} \exp
     \left(-\frac{2c}{n \tau}\right) \sum_{j=1}^2 \norma{\rho_j^0}_1 ,
\end{equation}
which goes to zero with $\tau$.

Note also that, under the hypotheses of Lemma~\ref{lem:timeLipM2}, if the initial datum is at equilibrium, i.e. $S(\rho_1^0(x),\rho_2^0(x))=0$ for a.e. $x\in\R$, it holds $C_\tau (t) = 0$ for all $t\geq 0$. Therefore, the approximate solutions $\brho^{\tau,\nu}$ are uniformly Lipschitz continuous for all $t\geq 0$:
\begin{equation} \label{eq:Lip_est_uniform}
\sum_{j=1}^2\norma{\rho_{j}^{\tau,\nu}(t_1,\cdot) - \rho_{j}^{\tau,\nu}(t_2,\cdot)}_1 \leq C\modulo{t_1-t_2} ,   \qquad\hbox{for all }\nu\in\N, \tau>0.
\end{equation}

\end{remark}

We can now prove the following convergence result.

\begin{theorem} \label{thm:relaxM2}
 Under hypotheses {\bf (S0)}, {\bf (S1)} and {\bf (S2)}, let $v_1=v_2=v$  
 (thus $F_1(\rho)=F_2(\rho)=F(\rho)=\rho v(\rho)$)
 and 
 $\rho_1^0,~\rho_2^0\in \left(\L1\cap\BV\right) (\R;[0,R/2])$ and $\brho^\tau=\left(\rho^\tau_1,\rho^\tau_2\right)$ and $y^\tau$ be the corresponding entropy weak solution of~\eqref{eq:2LWR+AV}.
 Then 
 \begin{align*}
 &\rho^\tau_j \longrightarrow \frac{r}{2} \qquad\hbox{in } \Lloc1(]0,+\infty[\,\times\R;\R), \quad i=1,2, \\
& y^\tau \longrightarrow y \qquad\hbox{in }
\Lloc{\infty}(\R^+;\R),
  \end{align*}
 as $\tau\searrow 0$, where $y\in\C0 (\R^+;\R)$ and $r \in \C{0} \left(\R^+; \L1 \left(\R; [0,R]\right)\right)$
is a weak solution of~\eqref{eq:limitLWR} with $M=2$. 
If moreover $\dot y^\tau \longrightarrow \dot y$ in 
$\Lloc{1}(\R^+;\R)$, then $(r,y)$
satisfies
 \begin{align}
    \int_{\R^+}\int_\R &\left(
    |r-k| \del_t \varphi +\sgn (r-k)(f(r)-f(k)) \del_x\varphi
    \right) dx\, dt  \nonumber\\
    & {+} \, {\int_{\R^+} \left(
    f(k)  - \dot y (t) k \right)^+ \varphi(t,y(t)) \, dt} \geq 0\,, \label{eq:non-classicalterm}
\end{align}
with $f(r):= 2 F(r/2)$. 
\end{theorem}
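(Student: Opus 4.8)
The plan is to combine the uniform-in-$\tau$ a priori bounds already at our disposal with a relaxation argument, and then to pass to the limit in the \emph{summed} entropy inequalities. First I would establish compactness. By Lemma~\ref{lem:TV} the spatial total variations $\tv\left(\rho_j^\tau(t,\cdot)\right)$ are bounded by $\T$ uniformly in $\tau$, while Lemma~\ref{lem:timeLipM2} together with Remark~\ref{rem:uniformLip} provides an $\L1$-Lipschitz-in-time modulus that is bounded uniformly in $\tau$ on every strip $[\eps,T]\times\R$ with $\eps>0$. Applying Helly's theorem on such strips and diagonalizing in $\eps\searrow0$, $T\nearrow\infty$ yields a subsequence, still denoted $\brho^\tau$, converging in $\Lloc1(]0,+\infty[\,\times\R;\R)$ to some $(\rho_1,\rho_2)$. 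Since $\dot y^\tau\in[0,V]$, the trajectories $y^\tau$ are uniformly Lipschitz, so Ascoli--Arzelà gives a further subsequence with $y^\tau\to y$ in $\Lloc\infty(\R^+;\R)$ and $y\in\C0(\R^+;\R)$.

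Next I would identify the limit. Writing the plain weak formulation of the first equation in~\eqref{eq:2LWR+AV} and isolating the source term, all the remaining contributions are bounded uniformly in $\tau$ (the densities lie in $[0,R/2]$, $y^\tau$ is Lipschitz, and the non-classical shock satisfies the Rankine--Hugoniot relation, so no flux defect appears); hence $\modulo{\int_{\R^+}\int_\R S(\rho_1^\tau,\rho_2^\tau)\varphi\,dx\,dt}=\mathcal O(\tau)$. Letting $\tau\searrow0$ and using the $\Lloc1$ convergence together with the continuity of $S$ gives $\int_{\R^+}\int_\R S(\rho_1,\rho_2)\varphi\,dx\,dt=0$ for all $\varphi\geq0$, whence $S(\rho_1,\rho_2)=0$ a.e.\ and, by {\bf (S1)}, $\rho_1=\rho_2=:r/2$ with $r=\rho_1+\rho_2$. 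Summing the two conservation laws (the source terms cancel identically, and the conservative non-classical shock produces no defect) and passing to the limit, so that $F(\rho_1^\tau)+F(\rho_2^\tau)\to 2F(r/2)=f(r)$, shows that $r$ is a weak solution of~\eqref{eq:limitLWR} with $M=2$ and datum $\sum_j\rho_j^0$.

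For the final assertion I would sum the entropy inequalities~\eqref{eq:multiEC} for $j=1,2$ with the choice $\kappa=k/2$, restricting to $\varphi\in\Cc1$ supported in $\{t>0\}$ so that the initial-datum terms drop. The crucial point is the sign of the surviving source contribution $\tfrac1\tau\int_{\R^+}\int_\R S(\rho_1^\tau,\rho_2^\tau)\big(\sgn(\rho_2^\tau-\tfrac k2)-\sgn(\rho_1^\tau-\tfrac k2)\big)\varphi\,dx\,dt$: by the monotonicity in {\bf (S0)}, {\bf (S2)} and the fact that {\bf (S1)} forces $S$ to vanish exactly on the diagonal, $S(a,b)$ has the sign of $a-b$, and a short case analysis shows the integrand is pointwise $\le0$; hence this term may be discarded while preserving the inequality $\geq0$. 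Using $\rho_j^\tau\to r/2$ one checks that $\modulo{\rho_1^\tau-\tfrac k2}+\modulo{\rho_2^\tau-\tfrac k2}\to\modulo{r-k}$ and $\sum_j\sgn(\rho_j^\tau-\tfrac k2)(F(\rho_j^\tau)-F(\tfrac k2))\to\sgn(r-k)(f(r)-f(k))$ in $\Lloc1$, while the AV term $2\big(F(\tfrac k2)-\dot y^\tau\tfrac k2\big)^+\varphi(t,y^\tau)$ equals $\big(f(k)-\dot y^\tau k\big)^+\varphi(t,y^\tau)$ since $2F(k/2)=f(k)$. Passing to the limit in this nonlinear term is where the extra hypothesis $\dot y^\tau\to\dot y$ in $\Lloc1$ enters: combined with the uniform convergence $\varphi(t,y^\tau)\to\varphi(t,y)$ and the Lipschitz dependence of $(\cdot)^+$ on $\dot y^\tau$, it yields $\int_{\R^+}(f(k)-\dot y\,k)^+\varphi(t,y(t))\,dt$, producing~\eqref{eq:non-classicalterm}; note the coefficient $1$ (not $2$) in front, precisely what distinguishes this limit from the entropy solution of~\cite{DMG2014}.

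The main obstacle is exactly this last passage to the limit. Because the total variation of $\dot y^\tau$ blows up like $1/\tau$ (as already observed in the proof of Theorem~\ref{thm:multiLWR+AV}), one only obtains for free the weak-$*$ convergence of $\dot y^\tau$; but $s\mapsto(f(k)-sk)^+$ is convex, so weak lower semicontinuity delivers an inequality in the \emph{wrong} direction and does not permit the conclusion. Strong $\Lloc1$ convergence of $\dot y^\tau$ is therefore indispensable, which is why the final inequality is stated conditionally.
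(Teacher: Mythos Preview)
Your proposal is correct and follows essentially the same approach as the paper: compactness via Helly's theorem on strips $[1/n,T]$ plus a diagonal argument (together with Ascoli--Arzel\`a for $y^\tau$), identification of the limit via $\int S(\rho_1^\tau,\rho_2^\tau)\varphi = \mathcal O(\tau)$ and {\bf (S1)}, and then summing the two entropy inequalities with the same sign observation on $(\sgn(\rho_2^\tau-\kappa)-\sgn(\rho_1^\tau-\kappa))S(\rho_1^\tau,\rho_2^\tau)\leq 0$ to discard the source contribution before passing to the limit. Your final paragraph, explaining that only weak-$*$ convergence of $\dot y^\tau$ is available for free and that convexity of $s\mapsto(f(k)-sk)^+$ makes lower semicontinuity go the wrong way, is a welcome addition that the paper does not spell out.
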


\begin{remark}
We notice that, as pointed out in the proof of Theorem~\ref{thm:multiLWR+AV}, we need to assume a-priori the $\W{1,1}$ convergence of the AV trajectories $y^\nu$ to $y$. \\
    Theorem~\ref{thm:relaxM2} shows in particular that $(r,y)$ cannot be an entropy weak solution of the moving bottleneck model~\cite{DMG2014,GGLP2019} with $\alpha=1/2$, which writes
 \begin{equation}  \label{eq:MB2}
 \begin{cases}
 \del_t r  + \del_x f(r) = 0, \\
 r(0, x) = \rho_1^0(x)+\rho_2^0(x), \\
{\dot y (t) = \min\left\{u (t), \tilde v(r(t,y(t)+))\right\},} \\
 y (0) = y_0,   \\
 {f(r  (t,y (t))) - \dot y (t)\, r(t,y(t))  \leq  \mathcal{F}_{1/2}\left(\dot y(t)\right)
 :=\max_{r\in[0,R]} \left( \frac{1}{2} f(2r)-r\,\dot y (t)\right),}
 \end{cases}
 x\in\R, ~t\geq 0,
 \end{equation}
 where $\tilde v (r) := f(r)/r=v(r/2)$, whose entropy weak solution satisfies
\begin{align}
    \int_{\R^+}\int_\R &\left(
    |r-k| \del_t \varphi +\sgn (r-k)(f(r)-f(k)) \del_x\varphi
    \right) dx\, dt  \nonumber\\
    & {+} \, { 2 \int_{\R^+} \left(
    f(k)  - k\,\dot y (t)  - \mathcal{F}_{1/2}\left(\dot y(t)\right)\right)^+ \varphi(t,y(t)) \, dt} \geq 0\,. \label{eq:non-classical}
\end{align}
Observe that we can estimate
\begin{align*}
   2& \left(
    f(k)  - k\, \dot y (t)   - \mathcal{F}_{1/2}\left(\dot y(t)\right)\right)^+\\
    &= 2\left(
    f(k)  - k\, \dot y (t)   - \max_{r\in[0,R]} \left( \frac{1}{2} f(2r)-r\,\dot y (t)\right)\right)^+\\
    &= \left(
    2 f(k)  - 2k\, \dot y (t)  - \max_{r\in[0,R]} \left( f(2r)-2r\,\dot y (t)\right)\right)^+\\
    &= \left(
    2 f(k)  - 2k\, \dot y (t)  + \min_{r\in[0,R]} \left( -f(2r)+2r\,\dot y (t)\right)\right)^+\\
    &\leq \left(
    2 f(k)  - 2k\, \dot y (t)  + \left( -f(k)+k\,\dot y (t)\right)\right)^+ \\
    &= \left(
    f(k)  - k\, \dot y (t)  \right)^+
\end{align*}
taking $r=k/2$. This shows that~\eqref{eq:non-classical} implies~\eqref{eq:non-classicalterm}, but the converse is not true. See also the numerical examples in Section~\ref{sec:relaxAVnum}.

\end{remark}

\begin{proofof}{Theorem~\ref{thm:relaxM2}}
 By Helly's Theorem, for any $n\in\N$ there exists a subsequence $\{\tau_n\}\subseteq\{\tau_{n-1}\}$ such that $\brho^{\tau_n}$ converges to some function $\bar\brho$ in $\Lloc1([1/n,+\infty[\,\times\R)$.
We can then construct by diagonal process a sequence, still labeled $\{\brho^{\tau_n}\}$, converging to $\bar\brho$ in $\Lloc1(]0,+\infty[\,\times\R)$. From~\eqref{eq:Lip_est} and~\eqref{eq:bound}, the limit satisfies
\begin{equation} \label{eq:uniformLip2}
    \sum_{j=1}^2\norma{\bar\rho_{j}(t_1,\cdot) - \bar\rho_{j}(t_2,\cdot)}_1 \leq C\modulo{t_1-t_2} ,   \qquad\hbox{for any }t_1,t_2>0.
\end{equation}
From~\eqref{eq:multiEC} we deduce that
\[
S(\bar\rho_1,\bar\rho_2)=0\qquad\hbox{a.e. in }]0,+\infty[\,\times\R.
\]
By {\bf (S1)}, this implies $\bar\rho_1=\bar\rho_2= \bar\rho$.
Moreover, since $\{\brho^{\tau_n}\}$ is equi-bounded, it actually converges in $\Lloc1([0,+\infty[\,\times\R)$.
By~\eqref{eq:uniformLip2} we can then pass to the limit in $\Lloc1$ concluding that
\[
\lim_{t\to 0+} S(\bar\rho_1(t,\cdot),\bar\rho_2(t,\cdot)) =0\,.
\]
Summing~\eqref{eq:multiEC} for $j=1,2$ and $\tau=\tau_n$ with $\varphi\in\Cc1 (]0,+\infty[\,\times\R;\R^+)$,
we obtain
\begin{align}
    \int_{\R^+}&\int_\R \left(
    |\rho^\tau_1-\kappa|+
     |\rho^\tau_2-\kappa|
      \right) \del_t\varphi \, dx\, dt \nonumber \\
    & +  \int_{\R^+}\int_\R
    \left(
    \sgn (\rho^\tau_1-\kappa)\left(F(\rho^\tau_1)-F(\kappa)\right) 
    + \sgn (\rho^\tau_2-\kappa)\left(F(\rho^\tau_2)-F(\kappa)\right) 
    \right) \del_x\varphi\, dx\, dt \nonumber\\ 
    & + \dfrac{1}{\tau} \int_{\R^+}\int_\R \left(\sgn (\rho^\tau_2-\kappa)- \sgn(\rho^\tau_1-\kappa) \right) S_{j} (\rho^\tau_{1},\rho^\tau_{2}) \, \varphi \, dx\, dt \nonumber \\
   & {+} \, 2  \int_{\R^+} \left(
    F(\kappa)  - \dot y^\tau (t) \kappa \right)^+ \varphi(t,y^\tau(t)) \, dt \geq 0\, \label{eq:ECM=2}
\end{align}
for all $\kappa\in\R$.
We note that
\begin{align*}
\left(\sgn (\rho^\tau_2-\kappa)- \sgn(\rho^\tau_1-\kappa) \right) S_{j} (\rho^\tau_{1},\rho^\tau_{2})
\leq 0\qquad \hbox{for every}~\kappa\in\R. 
\end{align*}
Indeed, this is obviously true if $\sgn (\rho^\tau_2-\kappa) = \sgn(\rho^\tau_1-\kappa)$. If
$\rho^\tau_1 \leq \kappa \leq \rho^\tau_2$, we get
\[
\left(\sgn (\rho^\tau_2-\kappa)- \sgn(\rho^\tau_1-\kappa) \right) S_{j} (\rho^\tau_{1},\rho^\tau_{2})
=
2 S_{j} (\rho^\tau_{1},\rho^\tau_{2}) \leq S_j(\kappa,\kappa) =0\,,
\]
due to the monotonicity in ${\bf (S0)}$ and ${\bf (S1)}$. The same holds for $\rho^\tau_2 \leq \kappa \leq \rho^\tau_1$. \\
Therefore,~\eqref{eq:ECM=2} implies
\begin{align}
    \int_{\R^+}&\int_\R \left(
    |\rho^\tau_1-\kappa|+
     |\rho^\tau_2-\kappa|
      \right) \del_t\varphi \, dx\, dt \nonumber \\
    & +  \int_{\R^+}\int_\R
    \left(
    \sgn (\rho^\tau_1-\kappa)\left(F(\rho^\tau_1)-F(\kappa)\right) 
    + \sgn (\rho^\tau_2-\kappa)\left(F(\rho^\tau_2)-F(\kappa)\right) 
    \right) \del_x\varphi\, dx\, dt \nonumber\\ 
    & {+} \, 2  \int_{\R^+} \left(
    F(\kappa)  - \dot y^\tau (t) \kappa \right)^+ \varphi(t,y^\tau(t)) \, dt \geq 0\,. \label{eq:ECM=2bis}
\end{align}
Passing to the limit as $\tau_n\searrow 0$ in~\eqref{eq:ECM=2bis}, thanks to the hypothesis of $\L1$ convergence of $\dot y^\nu$ to $\dot y$, we get that
$\bar\rho$ satisfies
\begin{align*}
    \int_{\R^+}\int_\R &\left(
    |2\bar\rho-2\kappa| \del_t \varphi +\sgn (\bar\rho-\kappa)(2F(2\bar\rho/2)-2F(2\kappa/2)) \del_x\varphi
    \right) dx\, dt  \\
    & {+} \,  \int_{\R^+} \left(
    2F(2\kappa/2)  - \dot y (t) 2\kappa \right)^+ \varphi(t,y(t)) \, dt \geq 0\,,
\end{align*}
which, setting $r=2\bar\rho$, $k=2\kappa$ and $f(r)= 2 F(r/2)$, is equivalent to~\eqref{eq:non-classicalterm}.

Finally, taking $\kappa\not\in [0,R/2]$ in~\eqref{eq:multiEC} we get for $j=1,2$ and $\tau=\tau_n$,
\begin{align*}
    \int_{\R^+}\int_\R &\left(
    \rho^\tau_j \del_t \varphi +F(\rho^\tau_j) \del_x\varphi
    \right) dx\, dt 
    +\int_\R \rho_j^0 (x)\varphi(0,x) \, dx  \\
    & + \dfrac{1}{\tau} \int_{\R^+}\int_\R  \left(  S_{j-1} (\rho^\tau_{j-1},\rho^\tau_j) - S_{j} (\rho^\tau_{j},\rho^\tau_{j+1})   \right) \varphi \, dx\, dt  = 0\,
\end{align*}
for all $\varphi\in\Cc1 (\R^2;\R)$.
Passing to the limit as $\tau_n\searrow 0$ and summing over $j=1,2$, we get
\begin{align*}
    &\int_{\R^+}\int_\R \left(
    2\bar\rho\, \del_t \varphi +2F(2\bar\rho/2) \del_x\varphi
    \right) dx\, dt 
    +\int_\R \left(\rho_1^0 (x)+\rho_2^0 (x)\right)\varphi(0,x) \, dx  \\
    &= \int_{\R^+}\int_\R \left(
    r\, \del_t \varphi + f(r) \del_x\varphi
    \right) dx\, dt 
    +\int_\R \left(\rho_1^0 (x)+\rho_2^0 (x)\right)\varphi(0,x) \, dx = 0\,,
\end{align*}
concluding the proof. 
\end{proofof}

\section{Numerical simulations} \label{sec:num}

To illustrate the behaviour of solutions of model~\eqref{eq:multiLWR+manyAV}, we compute approximate solutions obtained via a finite volume scheme with time splitting developed merging the numerical schemes proposed in~\cite{GoatinRossi2020} for multi-lane models and in~\cite{CDMG2017} for correctly capturing the dynamics around moving bottlenecks. See also~\cite{GDDMF2023, GoatinRossiNHM2020, VGC2017} for further extensions.

We consider a uniform space mesh of width $\Delta x$ and a time step $\Delta t$, subject to the stability condition 
\begin{equation*}
    \Delta t \leq \min\left\{
    \frac{\Delta x}{\|F_j'\|_\infty},
    \frac{\tau}{2\Sim}
    \right\},
\end{equation*}
see Lemma~\ref{lem:Linfty}.
We set
$
  x_k =  \left(k+1/2\right) \Delta x$ and
    $x_{k-1/2} = k \, \Delta x$
for $k\in \Z$, 
where $x_k$ is the center of cell $C_k=[x_{k-1/2},x_{k+1/2}[$  and $x_{k\pm 1/2}$ are the
interfaces. Set $\lambda= \Delta t/\Delta x$.  \\
We approximate the initial data as  
\begin{equation*}
  \rho_{j,k}^0 = \frac{1}{\Delta x} \int_{x_{k-1/2}}^{x_{k+1/2}} \rho_{j}^0 (x)\,  dx,
  \qquad \hbox{for}~j=1, \ldots, M.
\end{equation*}
The approximate solutions to~\eqref{eq:multi_model} are obtained through a Godunov type scheme with fractional step accounting for the contribution of the source term: for $j=1,\ldots,M$, and $n\in\N$, we set
\begin{align}
  \rho_{j,k}^{n+1/2} = \
  & \rho_{j,k}^n - \lambda\left[
    \Flux_j (\rho_{j,k}^n,\rho_{j,k+1}^n) - \Flux_j (\rho_{j,k-1}^n,\rho_{j,k}^n)
    \right], \label{eq:Godunov}
  \\[5pt]
  \rho_{j,k}^{n+1} = \
  & \rho_{j,k}^{n+1/2} + \frac{\Delta t}{\tau} \left[ S_{j-1} (\rho_{j-1,k}^{n+1/2}, \rho_{j,k}^{n+1/2})
    -  S_{j} (\rho_{j,k}^{n+1/2}, \rho_{j+1,k}^{n+1/2})\right],\nonumber
\end{align}
where
\begin{equation*}
  \Flux_j (u,w) =
    \min\left\{D_j (u), S_j (w)\right\}
\end{equation*}
with
\begin{align*}
  D_j (u) = F_j(\min\{u,\theta_j\}), \qquad
  S_j (u) = F_j(\max\{u,\theta_j\}),
\end{align*}
$\theta_j\in [0,R_j]$ being the point of maximum of $F_j$.

To capture the presence of an AV, let's say at position $y_i^n\in C_m$ in lane $i\in\{1,\ldots,M\}$, we modify the Godunov fluxes at the interfaces $x_{m\pm 1/2}$ as follows.
We denote by $\Riem_i$ the standard Riemann solver for 
$\del_t\rho + \del_x F_i(\rho) =0$,
i.e. $\Riem_i(\rho_L,\rho_R)(\xi)$ gives the value of the self-similar entropy weak solution corresponding to the Riemann initial data $(\rho_L,\rho_R)$ at $x=\xi t$.
We also set
\begin{equation*}
  u_i^n = \frac{1}{\Delta t} \int_{t^n}^{t^{n+1}} u_i (t)\,  dt,
  \qquad \hbox{for}~n\in\N,
\end{equation*}
the discretization of the AV's desired speed.
If
\begin{equation} \label{eq:dicrete_constraint}
F_i(\Riem(\rho_{i,m-1}^n,\rho_{i,m+1}^n)(u_i^n))
>
u_i^n \, \Riem(\rho_{i,m-1}^n,\rho_{i,m+1}^n)(u_i^n),
\end{equation}
we expect the flux constraint to be active at 
$\bar x_m=x_{m-1/2} + d_m^n \Delta x$ with
$d_m^n=\rho_{i,m}^n/ \hat\rho_{u_i^n}$
to ensure mass conservation. If $d_m^n\in [0,1]$, then $\bar x_m\in C_m$ and, setting $\Delta t_m^n:=\Delta x (1-d_m^n)/u_i^n$, we replace the numerical fluxes in~\eqref{eq:Godunov} by
\begin{align*}
    \tilde\Flux_i(\rho_{i,m-1}^n,\rho_{i,m}^n) &:=
    \Flux_i(\rho_{i,m-1}^n,\hat\rho_{u_i^n}), \\
    \tilde\Flux_i(\rho_{i,m}^n,\rho_{i,m+1}^n) &:=
    \max\left\{1-\Delta t_m^n/\Delta t,0\right\} F_i(\hat\rho_{u_i^n}).
\end{align*}
Besides, to track the AV's trajectory, at each time step we implement an explicit Euler scheme:
\[
y_i^{n+1}=y_i^n+ 
\begin{cases}
    u_i^n\, \Delta t, & \hbox{if~\eqref{eq:dicrete_constraint} holds},\\
    v_i(\rho_m^n) \, \Delta t, & \hbox{otherwise}.
\end{cases}
\]
In the following numerical tests we consider~\eqref{eq:multiLWR+manyAV} with
\begin{subequations} \label{eq:function_choice}
\begin{align}
v_j(\rho_j)&=V_j (1-\rho_j/R_j),\\
    F_j(\rho_j)&=\rho_j v_j(\rho_j)\\
    S_{j} (U_{j},U_{j+1})&= \left( v_{j+1}(\rho_{j+1}) - v_{j}(\rho_{j})\right)^+\rho_j - \left( v_{j+1}(\rho_{j+1}) - v_{j}(\rho_{j})\right)^-\rho_{j+1} .    
\end{align}
\end{subequations}

\subsection{A 3-lane road with heterogeneous speed limits and AVs}
\label{sec:3lanes}

As an illustration of the behaviour of model~\eqref{eq:multiLWR+manyAV}, we consider $M=3$ and $N_j=1$ for $j=1,2,3$.
We set $R_j=1$ for $j=1,2,3$ and $V_1=50$~km/h, $V_2=80$~km/h, $V_3=100$~km/h, thus considering different speed limits on each lane. AVs' desired speeds are set constantly to $u_1^1=u_2^1=u_3^1=30$~km/h and the relaxation parameter $\tau=0.05~\hbox{km}^{-1}$. \\
We consider a road stretch of length $L=10$~km with $\Delta x=0.02$ and initial conditions
\begin{align}
    \rho_1^0(x)&= 0.5 + 0.5 \sin{(0.5\pi x)}, \nonumber\\
    \rho_2^0(x)&= 0.5 + 0.5 \cos{(0.5\pi x)}, \label{eq:oscillatingIC}\\
    \rho_1^0(x)&= 0.5 + 0.5 \sin{(\pi x)}, \nonumber
\end{align}
for the (normalized) traffic densities on each lane, and
\[
y_{0,1}^1=1,\quad y_{0,2}^1=2,\quad y_{0,3}^1=3,
\]
for the AVs' positions.
\begin{figure}[ht]
\begin{subfigure}{0.5\textwidth}
  \centering
  \includegraphics[width=\linewidth]{./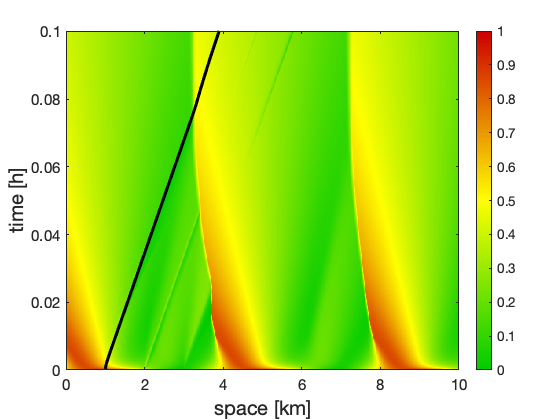} 
  \caption{Density $\rho_1$ and trajectory $y_1^1$ on lane 1}
  \label{fig:rho1}
\end{subfigure} 
\begin{subfigure}{0.5\textwidth}
  \centering
  \includegraphics[width=\linewidth]{./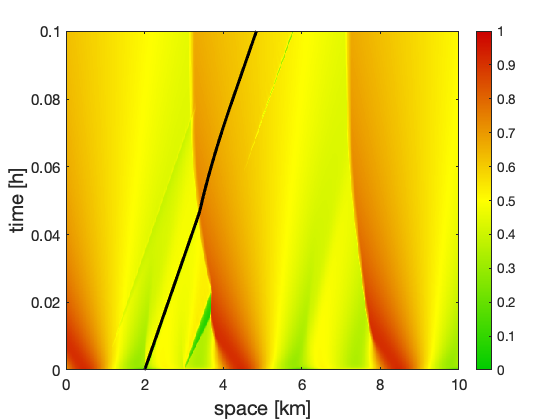} 
 \caption{Density $\rho_2$ and trajectory $y_2^1$ on lane 2}
  \label{fig:rho2} 
\end{subfigure} \\
\begin{subfigure}{0.5\textwidth}
  \centering
  \includegraphics[width=\linewidth]{./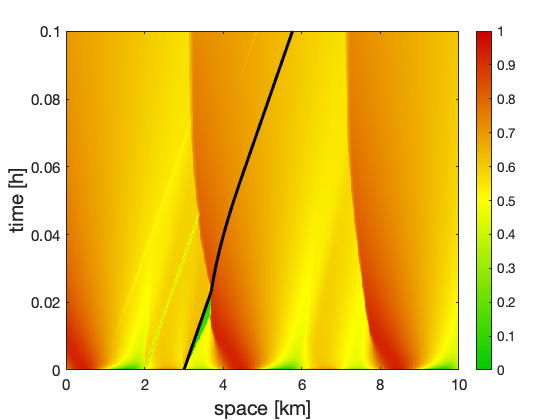} 
  \caption{Density $\rho_3$ and trajectory $y_3^1$ on lane 3}
  \label{fig:rho3}
\end{subfigure} 
\begin{subfigure}{0.5\textwidth}
  \centering
  \includegraphics[width=\linewidth]{./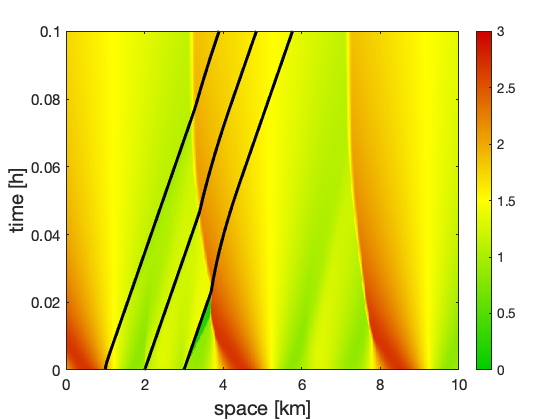} 
 \caption{Total density $r$ and AVs' trajectories}
  \label{fig:totrho} 
\end{subfigure}
\caption{Heat-map of the traffic densities on each lane and the total density on the road segment $[0, 10]$ km and the time interval $[0, 0.1]$ h, with the corresponding AV's trajectories (black lines).}
\label{fig:3lanes}
\end{figure}

In Figure~\ref{fig:3lanes}, we can observe that, due to the different speed limits, vehicles tend to move and accumulate into the fastest lanes 2 and 3, while lane 1 remains almost empty. Also, due to low traffic, on lane 1 there is no interaction between the AV and the bulk traffic. On the contrary, in Figure~\ref{fig:rho3} we can observe  the appearance of non-classical shocks. Moreover, we may observe that AVs slow down to adapt to the downstream traffic speed both in Figures~\ref{fig:rho2} and~\ref{fig:rho3}. Figure~\ref{fig:totrho} gives an overview of the total traffic density on the three lanes, showing that an oscillatory pattern in the density can still be observed due to the uneven distribution of the traffic density across lanes.

\subsection{Relaxation limit $\tau\searrow 0$ without AVs}
\label{sec:relax_num}

To illustrate the convergence result proved in Section~\ref{sec:asymM} (see Theorem~\ref{thm:relax}), we consider the same scenario as in the previous Section~\ref{sec:3lanes}, i.e. $M=3$, $R_j=1$ for $j=1,2,3$ and initial density data as in~\eqref{eq:oscillatingIC}. Now, no AVs are present. \\
We consider two cases:
\begin{itemize}
    \item[{\bf (C1)}] $V_1=V_2=V_3=80$~km/h, so that the speed function is the same on all lanes and we fulfill the hypotheses of Theorem~\ref{thm:relax};
    \item[{\bf (C2)}] $V_1=60$~km/h, $V_2=80$~km/h, $V_3=100$~km/h, to be compared with the solution of the conjectured limit LWR equation~\eqref{eq:limitLWR} with speed flux function $f(r)=r v(r)$, $v(r)=V(1-r/3)$, where $V=80$~km/h is the average maximal speed among lanes 1 to 3. Note that this is indeed the relaxation limit in case {\bf (C1)}.
\end{itemize}
\begin{figure}[ht]
\begin{subfigure}{0.5\textwidth}
  \centering
  \includegraphics[width=\linewidth]{./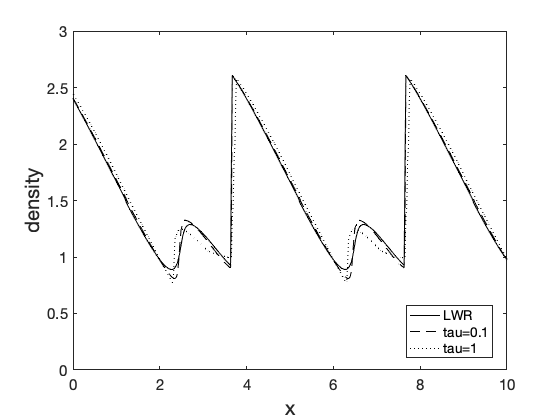} 
  \caption{$V_1=V_2=V_3=80$~km/h}
  \label{fig:relaxHom}
\end{subfigure} 
\begin{subfigure}{0.5\textwidth}
  \centering
  \includegraphics[width=\linewidth]{./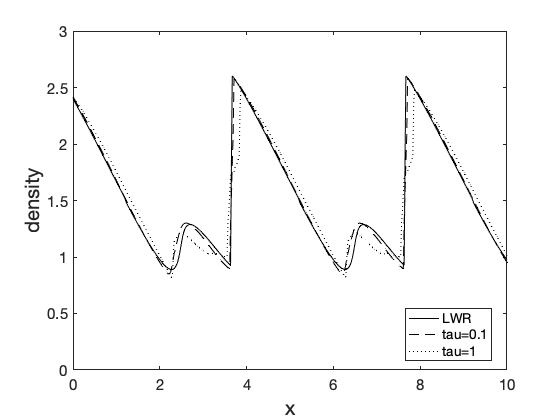} 
 \caption{$V_1=60$, $V_2=80$, $V_3=100$~km/h}
  \label{fig:relaxEtero} 
\end{subfigure} 
\caption{Density profiles at $t=1$~min of the total densities $\rho_1+\rho_2+\rho_3$ of model~\eqref{eq:multiLWR} with $\tau=0.1,1~\hbox{km}^{-1}$ and the solution $r$ of LWR model~\eqref{eq:limitLWR} for cases {\bf (C1)} (left) and {\bf (C2)} (right).}
\label{fig:relax}
\end{figure}

Figure~\ref{fig:relax} shows the profiles of the total density for model~\eqref{eq:multiLWR} for $\tau=1~\hbox{km}^{-1}$ and $\tau=0.1~\hbox{km}^{-1}$ and the solution of the limit LWR problem ~\eqref{eq:limitLWR} at time $t=1$~min. In particular, in Figure~\ref{fig:relaxHom} we can observe an illustration of relaxation limit stated in Theorem~\ref{thm:relax}. Moreover, Figure~\ref{fig:relaxEtero} suggests that the limit also holds for non homogeneous speeds on the different lanes, as conjectured in the design of test {\bf (C2)}.

\subsection{Relaxation limit $\tau\searrow 0$ with AV} \label{sec:relaxAVnum}

Referring to the results in Section~\ref{sec:relaxAV}, we aim at comparing the relaxation limit of model~\eqref{eq:2LWR+AV}, i.e. the multi-lane model with $M=2$ lanes and a single AV, with the solution of the corresponding moving bottleneck model~\eqref{eq:MB2}.

We take $V=V_1=V_2=2$, $R_1=R_2=1$ (thus $R=2$),
so that $F(\rho)=2\rho (1-\rho)$ and $f(r)=2r -r^2$.
Moreover, we set $\tau=0.01$.
We consider different density initial data and AV speeds, to see when non-classical shocks appear in the solution of~\eqref{eq:2LWR+AV} and to understand their nature. In the following, we denote by $\hat r$ and $\check r$ respectively the left and right traces of the non-classical shock in~\eqref{eq:MB2}, 
$\rho^*$ the density
satisfying $F(\rho^*)=u$ and $r^*$ the density
satisfying $f(r^*)=u$.
Setting $u(t)=1$, we get $\check r\simeq 0.15$, $\hat r\simeq 0.85$ $\rho^* =0.5$ and $r^*=1$. Accordingly, we consider the following constant initial data:
\begin{quote}
    \begin{itemize}
        \item[{\bf (IC1)}] $\rho^0_1(x)=\rho^0_2(x)\equiv 0.05$, i.e. $r^0(0)\equiv 0.1$;
        \item[{\bf (IC2)}] $\rho^0_1(x)=\rho^0_2(x)\equiv 0.15$, i.e. $r^0(0)\equiv 0.3$;
        \item[{\bf (IC3)}] $\rho^0_1(x)=\rho^0_2(x)\equiv 0.3$, i.e. $r^0(0)\equiv 0.6$;
        \item[{\bf (IC4)}] $\rho^0_1(x)=\rho^0_2(x)\equiv 0.45$, i.e. $r^0(0)\equiv 0.9$;
        \item[{\bf (IC5)}] $\rho^0_1(x)=\rho^0_2(x)\equiv 0.75$, i.e. $r^0(0)\equiv 1.5$.
    \end{itemize}
\end{quote}
The corresponding density profiles at $t=2$ are displayed in Figure~\ref{fig:2M1AV}. It appears clearly that, when the AV acts as a flux constraint, the density traces at the AV location differ between model~\eqref{eq:2LWR+AV} and~\eqref{eq:MB2}. In particular, we observe that we always have $r(t,y(t)-) > (\rho_1+\rho_2) (t,y(t)-) > (\rho_1+\rho_2) (t,y(t)+) > r(t,y(t)+)$.

\begin{figure}[ht]
\begin{subfigure}{0.5\textwidth}
  \centering
  \includegraphics[width=\linewidth]{./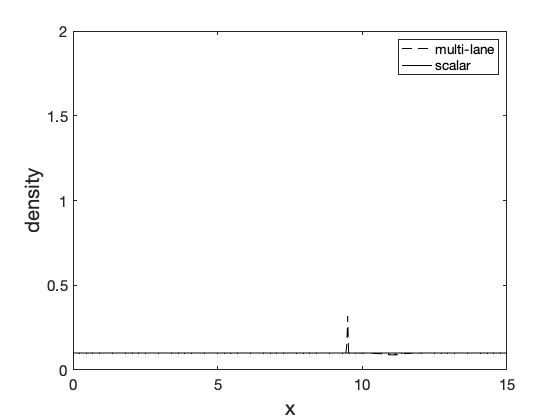} 
  \caption{{\bf (IC1)} $r^0\in\ ]0, \check r[$}
  \label{fig:IC1}
\end{subfigure} 
\begin{subfigure}{0.5\textwidth}
  \centering
  \includegraphics[width=\linewidth]{./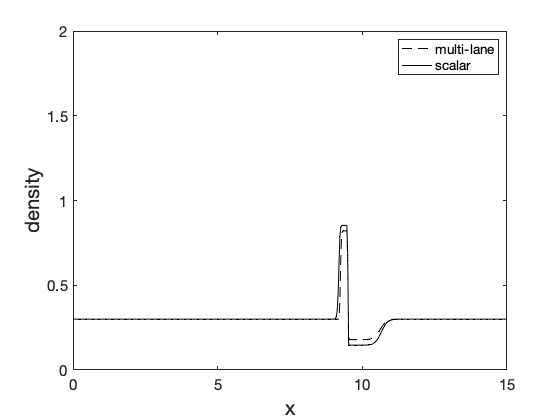} 
  \caption{{\bf (IC2)} $r^0\in\ ]\check r, \rho^*[$}
  \label{fig:IC2}
\end{subfigure} 
\end{figure}
\begin{figure}[ht]\ContinuedFloat
\begin{subfigure}{0.5\textwidth}
  \centering
  \includegraphics[width=\linewidth]{./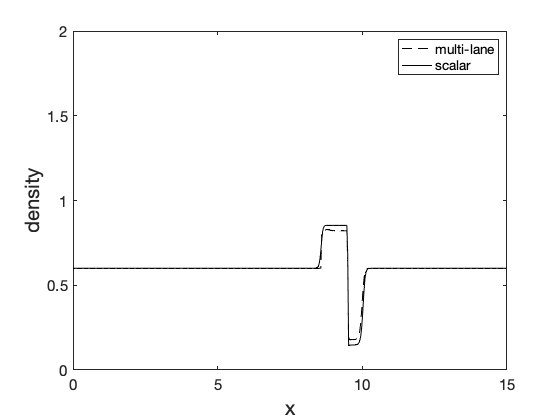} 
  \caption{{\bf (IC3)} $r^0\in\ ]\rho^*,\hat r[$}
  \label{fig:IC3}
\end{subfigure} 
\begin{subfigure}{0.5\textwidth}
  \centering
  \includegraphics[width=\linewidth]{./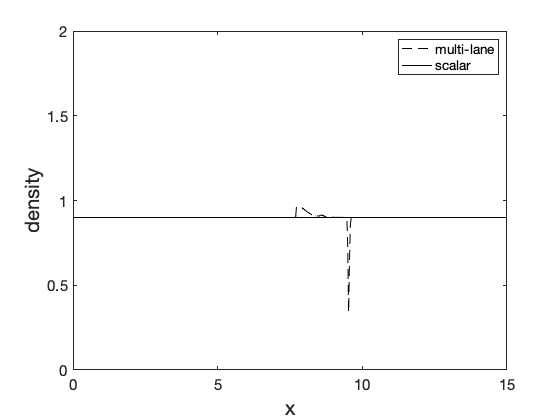} 
  \caption{{\bf (IC4)} $r^0\in\ ]\hat r, r^*[$}
  \label{fig:IC4}
\end{subfigure} 
\end{figure}
\begin{figure}[ht]\ContinuedFloat
\begin{subfigure}{\textwidth}
  \centering
  \includegraphics[width=.5\linewidth]{./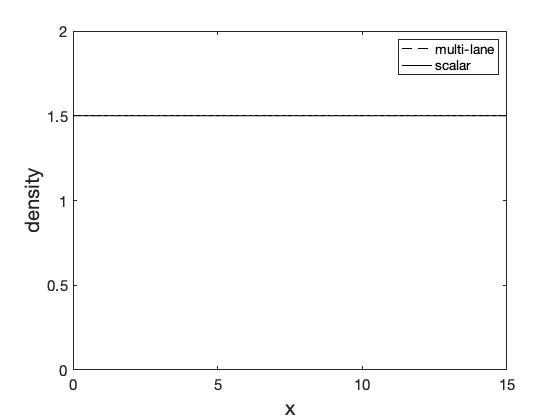} 
  \caption{{\bf (IC5)} $r^0\in\ ]r^*, R[$}
  \label{fig:IC5}
\end{subfigure}
\caption{Comparison between $\rho_1(2, \cdot) + \rho_2 (2,\cdot)$ solutions of~\eqref{eq:2LWR+AV} with $\tau=0.01$ and $r$ solution of~\eqref{eq:MB2},
with $r_0:=\rho_1^0+\rho_2^0$, for $u(t)=1$.}
\label{fig:2M1AV}
\end{figure}

\newpage

\appendix

\section{$\L1$ stability for systems of weakly coupled balance laws}\label{app:stability}

Let us consider the $M\times M$ systems of balance laws coupled in the source term
\begin{subequations}\label{eq:balance}
 \begin{equation} \label{eq:balance1}
\begin{cases}
\del_t \brho + \del_x F(\brho) = G(\brho),\\
\brho(0,x) = \brho^0(x),
\end{cases}
\qquad x\in\R, ~t\geq 0,
\end{equation}
and
 \begin{equation} \label{eq:balance2}
\begin{cases}
\del_t \bsigma + \del_x \tilde F(\bsigma) =  G(\bsigma),\\
\bsigma(0,x) = \bsigma^0(x),
\end{cases}
\qquad x\in\R, ~t\geq 0,
\end{equation}
\end{subequations}
for some $\brho^0,\bsigma^0\in\BV(\R;\R^M)$,
where 
\begin{align*}
    F(\brho)&=\left(F_1(\rho_1),\ldots,F_M(\rho^M) \right)^T, \\
    \tilde F(\brho)&=\left(\tilde F_1(\rho_1),\ldots,\tilde F_M(\rho^M) \right)^T,\\
G(\brho)&=\left(G_1(\brho),\ldots,G_M(\brho)\right)^T.
\end{align*}
Note that system~\eqref{eq:multiLWR} corresponds to setting 
\begin{align*}
    F_j(\rho_j)&=\rho_j v_j(\rho_j), \\
    G_j(\brho)&= \dfrac{1}{\tau}\left(S_{j-1} (\rho_{j-1},\rho_j) - S_{j} (\rho_{j},\rho_{j+1})\right),
\end{align*}
in~\eqref{eq:balance1} for $j=1,\ldots,M$.

The following stability result holds.

\begin{theorem} \label{thm:L1stability}
Let~\eqref{eq:balance1} and~\eqref{eq:balance2} admit unique $\BV$ entropy weak solutions $\brho$, $\bsigma\in\C0\left([0,+\infty[;\L1(\R;\R^M) \right)$ and let $\mathfrak{T}:=\min\left\{\max_{s\in[0,t]}\tv(\brho(s,\cdot),\max_{s\in[0,t]}\tv(\bsigma(s,\cdot))\right\}$
and
$\mathfrak{L}:=\max_{j=1,\ldots,M}\norma{F_j'-\tilde F_j'}_\infty$.
Then for all $t\geq 0$ it holds
\begin{equation}\label{eq:stimaL1bis}
    \norma{\brho(t,\cdot)-\bsigma(t,\cdot)}_1
    \leq 
    e^{\norma{\nabla G}_\infty t} \norma{\brho^0-\bsigma^0}_1
    +\frac{M\, \mathfrak{T}\, \mathfrak{L}}{\norma{\nabla G}_\infty}
    \left( e^{\norma{\nabla G}_\infty t} -1 \right).
\end{equation}
\end{theorem}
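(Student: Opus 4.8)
The plan is to exploit the \emph{diagonal} structure of the flux: since each $F_j$ depends only on $\rho_j$, every component $\rho_j$ (respectively $\sigma_j$) is an entropy weak solution of the \emph{scalar} balance law
\[
\del_t \rho_j + \del_x F_j(\rho_j) = G_j(\brho),
\qquad
\del_t \sigma_j + \del_x \tilde F_j(\sigma_j) = G_j(\bsigma),
\]
so that the coupling between the equations enters only through the source. This reduces the problem to a component-wise comparison, after which I sum over $j$ and close with a Gronwall argument.

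First I would fix $j$ and write the Kruzhkov entropy inequalities for $\rho_j$ and $\sigma_j$, then apply the doubling-of-variables technique to the pair, taking $k=\sigma_j(s,y)$ in the inequality for $\rho_j$ and $k=\rho_j(t,x)$ in the inequality for $\sigma_j$. Passing to the diagonal $s\to t$, $y\to x$ in the standard way yields, for a.e.\ $t$,
\[
\frac{d}{dt}\norma{\rho_j(t,\cdot)-\sigma_j(t,\cdot)}_1
\leq
\int_\R \sgn(\rho_j-\sigma_j)\bigl(G_j(\brho)-G_j(\bsigma)\bigr)\,dx
+ E_j(t),
\]
where $E_j(t)$ is the defect generated by the flux mismatch $F_j\neq\tilde F_j$. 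When $F_j=\tilde F_j$ this defect vanishes and one recovers the classical $\L1$ contraction; in general $E_j$ is the antisymmetric remainder, controlled by $\norma{F_j'-\tilde F_j'}_\infty\min\{\tv(\rho_j),\tv(\sigma_j)\}$ exactly as in the argument already invoked for Lemma~\ref{lem:multiLWRstability} (see~\cite[Section~3]{Bianchini2000}). Since $\min\{\tv(\rho_j),\tv(\sigma_j)\}\leq\mathfrak{T}$, this gives $E_j(t)\leq\mathfrak{L}\,\mathfrak{T}$ uniformly in $t$.

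Next I would estimate the source term by the Lipschitz continuity of $G$: applying the mean value theorem component-wise and summing over $j$,
\[
\sum_{j=1}^M\int_\R\modulo{G_j(\brho)-G_j(\bsigma)}\,dx
\leq \norma{\nabla G}_\infty \sum_{j=1}^M\norma{\rho_j(t,\cdot)-\sigma_j(t,\cdot)}_1,
\]
where $\norma{\nabla G}_\infty$ is read as the $\ell^1\!\to\!\ell^1$ operator norm of the Jacobian, i.e.\ its maximal absolute column sum. Writing $\Phi(t):=\norma{\brho(t,\cdot)-\bsigma(t,\cdot)}_1=\sum_{j=1}^M\norma{\rho_j-\sigma_j}_1$ and summing the $M$ component inequalities, the flux defects contribute at most $M\mathfrak{T}\mathfrak{L}$, so that
\[
\frac{d}{dt}\Phi(t)\leq \norma{\nabla G}_\infty\,\Phi(t)+M\,\mathfrak{T}\,\mathfrak{L}.
\]
Integrating this scalar linear differential inequality by Gronwall's lemma produces $\Phi(t)\leq e^{\norma{\nabla G}_\infty t}\Phi(0)+\frac{M\mathfrak{T}\mathfrak{L}}{\norma{\nabla G}_\infty}\bigl(e^{\norma{\nabla G}_\infty t}-1\bigr)$, which is precisely~\eqref{eq:stimaL1bis}.

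The main obstacle is the rigorous derivation and bound of the flux-mismatch defect $E_j$ in the doubling step: one must track how the difference $F_j-\tilde F_j$ survives the symmetrisation and control it by $\min\{\tv(\rho_j),\tv(\sigma_j)\}$ rather than by a crude $\norma{\,\cdot\,}_\infty$ estimate. This is the classical but delicate part of the Kruzhkov comparison for distinct fluxes; the source estimate and the final Gronwall step are routine. A minor technical point to verify is the admissibility of the regularised test function up to spatial infinity, which is guaranteed here by the $\L1\cap\BV$ setting together with the finite propagation speed inherited from the bounded flux derivatives.
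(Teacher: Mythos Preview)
Your argument is correct, but it proceeds along a different line than the paper. The paper works at the level of the wave-front tracking approximations with operator splitting already introduced in Section~\ref{sec:existence}: on each subinterval $[t^n,t^{n+1}[$ the homogeneous scalar laws are evolved and compared via the flux-mismatch estimate of~\cite[Section~3]{Bianchini2000}, giving the additive error $\mathfrak{T}\,\mathfrak{L}\,\Delta t^\nu$ per component; then the source update~\eqref{eq:update1app}--\eqref{eq:update2app} is subtracted and bounded by Lipschitz continuity of $G$, producing the multiplicative factor $(1+\Delta t^\nu\norma{\nabla G}_\infty)$. Iterating $t/\Delta t^\nu$ times and letting $\Delta t^\nu\to 0$ yields~\eqref{eq:stimaL1bis}. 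Your route instead performs the Kruzhkov doubling directly on the continuous solutions, obtains the differential inequality $\dot\Phi\leq\norma{\nabla G}_\infty\Phi+M\mathfrak{T}\mathfrak{L}$, and closes with Gronwall. Both rely on the same key ingredient---the $\tv$-controlled flux-defect estimate---so neither is deeper than the other; yours is more self-contained and avoids the approximation layer, while the paper's discrete argument stays methodologically consistent with the splitting framework used throughout and makes the origin of the exponential factor transparent as the limit of $(1+\Delta t^\nu\norma{\nabla G}_\infty)^{t/\Delta t^\nu}$. Your remark that $\norma{\nabla G}_\infty$ must be read as the $\ell^1\to\ell^1$ operator norm (maximal absolute column sum) is a useful clarification that the paper leaves implicit.
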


\begin{proof}
Let us consider WFT approximate solutions constructed by fractional steps as in Section~\ref{sec:asymM}:
\begin{itemize}
    \item 
In any time interval $[t^n,  t^{n+1}[$, $n\in\N$, $\brho^{\tau,\nu}$ is the WFT approximation of 
\begin{equation} \label{eq:hom_step1app}
\begin{cases}
\del_t \rho_j + \del_x F_j(\rho_j) = 0,  \\
\rho_j(t^{n},x) = \rho_{j}^{\tau,\nu}(t^{n}+,x),
\end{cases}
\qquad 
\begin{array}{l}
x\in\R, ~t\in[t^{n}, t^{n+1}[, \\
j=1,\ldots,M, 
\end{array}
\end{equation}
 and $\bsigma^{\tau,\nu}$ is the WFT approximation of 
\begin{equation} \label{eq:hom_step2app}
\begin{cases}
\del_t \sigma_j + \del_x \tilde F_j(\sigma_j) = 0,  \\
\sigma_j(t^{n},x) = \sigma_{j}^{\tau,\nu}(t^{n}+,x),
\end{cases}
\qquad 
\begin{array}{l}
x\in\R, ~t\in[t^{n}, t^{n+1}[, \\
j=1,\ldots,M, 
\end{array}
\end{equation}
constructed as described e.g. in~\cite[Section 2.3]{HoldenRisebroBook2015}.

\item At time $t=t^{n+1}$, we define
\begin{align} \label{eq:update1app}
\rho_{j}^{\tau,\nu}(t^{n+1}+,\cdot)&=\rho_{j}^{\tau,\nu}(t^{n+1}-,\cdot) 
+ \Delta t^\nu\, G(\brho^{\tau,\nu}(t^{n+1}-,\cdot)), \\
\label{eq:update2app}
\sigma_{j}^{\tau,\nu}(t^{n+1}+,\cdot)&=\sigma_{j}^{\tau,\nu}(t^{n+1}-,\cdot) 
+ \Delta t^\nu\, G(\bsigma^{\tau,\nu}(t^{n+1}-,\cdot)),.
\end{align}
\end{itemize}
Solutions of~\ref{eq:hom_step1app} and~\ref{eq:hom_step2app} satisfy
\begin{equation}\label{eq:FL1app}
    \norma{\rho_{j}^{\tau,\nu}(t^{n+1}-,\cdot) - \sigma_{j}^{\tau,\nu}(t^{n+1}-,\cdot)}_1
    \leq 
    \norma{\rho_{j}^{\tau,\nu}(t^{n}+,\cdot) - \sigma_{j}^{\tau,\nu}(t^{n}+,\cdot)}_1
    +  \mathfrak{T}\, \mathfrak{L} \,  \Delta t^\nu.
\end{equation}
Subtracting~\eqref{eq:update2app} from~\eqref{eq:update1app} and taking $\L1$-norms, we get 
\begin{align*}
    \norma{\rho_{j}^{\tau,\nu}(t^{n+1}+,\cdot) - \sigma_{j}^{\tau,\nu}(t^{n+1}+,\cdot)}_1
    \leq \norma{\rho_{j}^{\tau,\nu}(t^{n+1}-,\cdot)  - \sigma_{j}^{\tau,\nu}(t^{n+1}-,\cdot)}_1
    \left( 1+ \Delta t^\nu \norma{\nabla G}_\infty \right) ,
\end{align*}
which, combined with~\eqref{eq:FL1app}, gives
\begin{align*}
    \sum_{j=1}^M &\, \norma{\rho_{j}^{\tau,\nu}(t^{n}+,\cdot) - \sigma_{j}^{\tau,\nu}(t^{n}+,\cdot)}_1 \\
    &\leq \left(1+ \Delta t^\nu \norma{\nabla G}_\infty \right)^{t^n/\Delta t^\nu} \sum_{j=1}^M\norma{\rho_{j}^0 - \sigma_{j}^0}_1 
     + \Delta t^\nu\,M\, \mathfrak{T}\, \mathfrak{L} \, \sum_{k=1}^{t^n/\Delta t^\nu} \left(1+ \Delta t^\nu \norma{\nabla G}_\infty \right)^k \\
     &= \left(1+ \Delta t^\nu \norma{\nabla G}_\infty \right)^{t^n/\Delta t^\nu} \sum_{j=1}^M\norma{\rho_{j}^0 - \sigma_{j}^0}_1  + \frac{M\, \mathfrak{T}\, \mathfrak{L} }{\norma{\nabla G}_\infty} \left[\left(1+ \Delta t^\nu \norma{\nabla G}_\infty \right)^{t^n/\Delta t^\nu +1}-1\right], 
\end{align*}
which converges to~\eqref{eq:stimaL1bis} as $\Delta t^\nu\searrow 0$.
\end{proof}

\begin{remark}
Note that, setting $G\equiv 0$ in~\eqref{eq:stimaL1bis}, one recovers the classical stability estimate for scalar equations. Instead,~\eqref{eq:L1contraction} is stronger than~\eqref{eq:stimaL1bis}, due to the specific monotonocity assumption {\bf (S0)}. In particular, estimate~\eqref{eq:stimaL1bis} does not suffice to conclude in the proof of Theorem~\ref{thm:relax}, since in this case $\norma{\nabla G}_\infty=2\Sim/\tau$ and we get
\begin{equation*} 
    \sum_{j=1}^M \norma{\rho_j^\tau(t,\cdot) - \frac{r(t,\cdot)}{M}}_{\L1([a,b])}
    = \mathcal{O}(1) \cdot (b-a) \left(\tau e^{2\Sim/\sqrt{\tau}} + e^{-1/\sqrt{\tau}}\right) \quad \xrightarrow[\tau \to 0]{} \quad \infty \,.
\end{equation*}
\end{remark}

{

  \small

 \bibliography{AVmultilane}

\bibliographystyle{abbrv}

}

\end{document}